\newcommand{\R}{\mathbb{R}}
\newcommand{\Rn}{\mathbb{R}^n}
\newcommand{\p}{\varphi}
\newcommand{\e}{\delta}
\newcommand{\E}{\Delta}
\newcommand{\diam}{\textnormal{diam} \,}
\font\fFt=eusm10 
\font\fFa=eusm7  
\font\fFp=eusm5  
\def\K{\mathchoice
{\hbox{\,\fFt K}}
{\hbox{\,\fFt K}}
{\hbox{\,\fFa K}}
{\hbox{\,\fFp K}}}
\newcounter{minutes}\setcounter{minutes}{\time}
\newcounter{hours}\setcounter{hours}{\time}
\numberwithin{equation}{section}
\theoremstyle{plain}
\newtheorem{theorem}[equation]{Theorem}
\newtheorem{corollary}[equation]{Corollary}
\newtheorem{proposition}[equation]{Proposition}
\newtheorem{lemma}[equation]{Lemma}
\newtheorem{remark}[equation]{Remark}
\newcounter {own}
\def\theown {\thesection       .\arabic{own}}
\newenvironment{nonsec}{\bf
\setcounter{own}{\value{equation}}\addtocounter{equation}{1}
\refstepcounter{own}
\medskip

\noindent \theown.$\ \,$}{\!\!. }
\begin{document}

\begin{center}
{\bf \large Teichm\"uller's problem in space}
\end{center}

\begin{center}
{\bf R. Kl\'en, V. Todor\v{c}evi\'c and M. Vuorinen}
\end{center}


\def\thefootnote{}
\footnotetext{
\texttt{\tiny File:~\jobname .tex,
          printed: \number\year-\number\month-\number\day,
          \thehours.\ifnum\theminutes<10{0}\fi\theminutes}
}
\makeatletter\def\thefootnote{\@arabic\c@footnote}\makeatother

{\bf Abstract:}  Quasiconformal homeomorphisms of the whole space
${\mathbb R}^n,$  onto itself normalized at one or two points are studied.
In particular, the stability theory, the case when the maximal dilatation tends to $1\,,$ is
in the focus. Our main result provides a spatial analogue of a classical result due to Teichm\"uller. Unlike Teichm\"uller's result, our bounds are explicit.
Explicit bounds are based on two sharp well-known distortion results:
the quasiconformal Schwarz lemma and the bound for linear dilatation.
Moreover, Bernoulli type inequalities and asymptotically sharp bounds for special functions involving complete elliptic integrals are applied to simplify the computations.
Finally, we discuss  the behavior
of the quasihyperbolic metric under quasiconformal maps and prove a sharp result for quasiconformal maps of
${\mathbb R}^n \setminus \{0\}$  onto itself.

 {\bf 2010 Mathematics Subject Classification:} Primary
30C65, secondary 30C62.

\section{Introduction}

One of the main topics of the theory of $K$-quasiconformal maps of subdomains of the Euclidean space $\Rn$, $n \ge 2$, deals with the behavior of this class of mappings when $K \to 1$. For higher dimensions $n \ge 3$ a surprising limiting behavior occurs: when $K=1$, the class of $K$-quasiconformal maps coincides with the class of M\"obius transformations. This is the content of a classical theorem due to Liouville, for the case of smooth maps. In his pioneering work, Yu.~G.~Reshetnyak \cite{r} gave estimates for the distance of a given $K$-quasiconformal map from the closest M\"obius transformation in the sense of suitable norm when $|K-1|$ is small and gave the name
"stability theory" for this research area. Reshetnyak's work provides, among other things, a far-reaching generalization of Liouville's theorem.  As a sample result  of Reshetnyak's deep work we formulate  \cite[Lemma 2.9]{r} where the stability estimate is expressed in terms of a function  $\mu\,,$ known only qualitatively.

\begin{theorem} {\rm (\cite[Lemma 2.9]{r})} \label{resh}
There are a number $q$, $0<q<1$, and a nondecreasing function $\mu \colon (0,\infty) \to \R$ such that $\mu(t) \to 0$ as $t \to 0$ and, for every mapping $f \colon B^n(a,r) \to \Rn$ with distortion  bounded by $K$, we can indicate a M\"obius transformation $g$ for which
\[
  |x-g^{-1}(f(x))| \le r \mu(K-1)
\]
for all $x \in B^n(a,qr)$.
\end{theorem}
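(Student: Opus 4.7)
My approach is a compactness-and-contradiction argument, powered by Liouville's theorem: every $1$-quasiconformal map from a domain in $\Rn$ (for $n\ge 3$) into $\Rn$ is the restriction of a M\"obius transformation.

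First I would reduce to the case $a=0$, $r=1$. Pre-composing with the similarity $y\mapsto a+ry$ (a M\"obius transformation) converts the problem on $B^n(a,r)$ into one on $B^n$, with the bound $r\mu(K-1)$ becoming $\mu(K-1)$; the output $g$ transports back correspondingly. Next, since choosing $g$ in the conclusion is equivalent to freely post-composing $f$ by a M\"obius transformation, I may normalize each $K$-quasiconformal $f\colon B^n \to \Rn$ by such a post-composition so that, say, $f(0)=0$ and $|f(e_1)|=1$. Let $\mathcal{F}_K$ denote the resulting normalized family; by Mori-type distortion estimates it is sequentially precompact in the topology of locally uniform convergence on $B^n$.

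Now fix $q\in(0,1)$, say $q=1/2$. Suppose for contradiction that the conclusion fails: there exist $\varepsilon_0>0$, a sequence $K_j\to 1$, and $f_j\in \mathcal{F}_{K_j}$ with
\[
\inf_{g} \sup_{x\in B^n(0,q)} |x-g^{-1}(f_j(x))| \ge \varepsilon_0,
\]
the infimum taken over all M\"obius $g$. Passing to a subsequence, $f_j\to f_\infty$ locally uniformly on $B^n$. Lower semicontinuity of the maximal dilatation under locally uniform convergence, combined with $K_j\to 1$, forces $f_\infty$ to be $1$-quasiconformal; the normalization $f_\infty(0)=0$, $|f_\infty(e_1)|=1$ prevents degeneration. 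By Liouville's theorem, $f_\infty$ extends to a M\"obius transformation of $\overline{\Rn}$, and in particular $f_\infty^{-1}$ is continuous on the compact set $\overline{f_\infty(B^n(0,q))}$. Testing the infimum with $g=f_\infty$ and using the uniform convergence $f_j\to f_\infty$ on $\overline{B^n(0,q)}$ (which is compactly contained in $B^n$ thanks to $q<1$) gives
\[
\sup_{x\in B^n(0,q)} |x - f_\infty^{-1}(f_j(x))| = \sup_{x\in B^n(0,q)} |f_\infty^{-1}(f_\infty(x)) - f_\infty^{-1}(f_j(x))| \longrightarrow 0,
\]
contradicting the lower bound $\varepsilon_0$.

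Finally, defining $\mu(t)$ as the supremum, over all $f\in\mathcal{F}_{1+t}$, of $\inf_{g} \sup_{B^n(0,q)} |x-g^{-1}(f(x))|$, the contradiction above yields $\mu(t)\to 0$ as $t\to 0^+$; replacing $\mu$ by a nondecreasing majorant produces the function required by the statement. The main technical obstacle is really the setup rather than the main argument: one must verify the sequential precompactness of $\mathcal{F}_K$ and the semicontinuity of the dilatation, both standard but nontrivial inputs from the Reshetnyak-style theory, and confirm that the normalization keeps the limit nondegenerate. Once these are in place, the Liouville rigidity does the rest, which explains why $\mu$ is produced only qualitatively — a feature the authors later contrast with their own explicit bounds.
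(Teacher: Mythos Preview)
The paper does not supply its own proof of this statement: Theorem~\ref{resh} is quoted verbatim from Reshetnyak's monograph, and the authors merely remark that ``the proof of Theorem~\ref{resh} makes use of normal family arguments and does not seem to give explicit quantitative bound for the function $\mu(t)$.'' Your compactness-and-contradiction scheme, driven by Liouville rigidity, is precisely the normal-family mechanism the authors are alluding to, so in spirit your proposal matches the original source and the paper's description of it.

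One technical slip worth flagging: after rescaling to the unit ball you normalize by $f(0)=0$ and $|f(e_1)|=1$, but $e_1\in\partial B^n$ is not in the domain of $f\colon B^n\to\Rn$, so $f(e_1)$ is undefined. Replace $e_1$ by any fixed interior point, e.g.\ $\tfrac12 e_1$, and the rest of the argument (precompactness via distortion bounds, lower semicontinuity of $K$, nondegenerate limit, Liouville, testing the infimum with $g=f_\infty$) goes through as you describe. You should also note that ``distortion bounded by $K$'' in Reshetnyak's formulation covers quasiregular, not necessarily injective, maps---the paper itself observes this just below the statement---but your outline survives this generalization, since both Liouville's theorem and the relevant normal-family theory extend to the quasiregular class and your two-point normalization rules out constant limits.
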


\medskip

Note that a mapping is quasiconformal if and only if it is injective and of bounded distortion.
The proof of Theorem \ref{resh}
makes use of normal family arguments and does not seem to give explicit quantitative bound for the function $\mu(t)\,.$

Our goal here is to study the stability theory and to establish quantitative explicit bounds with concrete constants that enable us to estimate the distance of a normalized $K$-quasiconformal map from the identity map in terms of $K-1$ and the dimension $n$. We consider the standard normalization which requires that the mapping keeps two points fixed and prove a stability result for dimensions $n \ge 2\,,$ which is a counterpart  O. Teichm\"uller's
result in the  case $n=2$.

For the statement and formulation of our results we introduce some necessary notation.
For $n \ge 2, K \ge 1,$ let
$$
QC_K({\mathbb R }^n) = \{f\,:\,\mathbb R^n \to \mathbb R^n \,: \,f \,\mbox{is}\,\,K-\mbox{
quasiconformal } \}\,.$$
Here and through the paper we use the standard definition of
$K$-quasicon\-formality from
\cite{v1}. It is a well-known basic fact that a map $f \in QC_K({\mathbb
R }^n)$ has a
homeomorphic extension to $\infty$ with $f(\infty)= \infty\,$ (in fact,
this can be also seen from Proposition \ref{qcestimate} below). Thus, $f$
is defined in the M\"obius space
$\overline{\mathbb R^n} = \mathbb R^n \cup \{ \infty \} \,.$ Without
further remark we
always assume that our maps are extended in this way. For the sake of
convenience, we
will consider a subclass of $
QC_K({\mathbb R }^n)$ consisting of maps normalized at two finite points (plus the above normalization at infinity)
as follows
\[
M_K({\mathbb R }^n) = \{f \in QC_K({\mathbb R }^n) \colon
f(x)=x,\quad\forall
x\in \{0, e_1, \infty\} \} \,,
\]
where $e_1=(1,0,...,0)$ is the first unit vector.

In his classical work \cite{t} O. Teichm\"uller studied the class $
M_K({\mathbb R }^2)$ and proved the following inequality for the
hyperbolic metric
$s_G$ of $G= {\mathbb R }^2 \setminus \{ 0, e_1\}:$ If $f \in
M_K({\mathbb R }^2)\,,$ then the sharp inequality
$$
K \ge \exp(s_G(x,f(x)))
$$
holds for all $x \in G\,.$ For the definition of the hyperbolic metric see \cite{kl}.

This result may be considered as a stability result: $f(x)$ is contained
in the ball
$B_{s_G}(x, \log K)$ of the metric $s_G$ centered at the point $x$ and
with the radius $\log K\,.$ In particular, for $K \to 1\,,$ the radius
tends to $0\,.$

On one hand this result is sharp, on the other hand the information it
provides is
implicit. Indeed the geometric structure of the balls $B_{s_G}(x,r)$ has
not been
carefully studied to our knowledge and even finding a useful upper bound for its chordal
diameter in terms of $x, r$ is not known to us.
Furthermore, for a general plane domain $D\,,$
it is a basic fact that the shapes of the balls $B_{s_D}(z,t)$ depend very much on the
geometric structure (and "thickness/thinness") of the
boundary $\partial D\,,$ the center $z$ and the radius $t\,,$ but quantitative
estimates are hard to find in the literature. 

It is easy to see that for a fixed $t > 0$ there is a diversity of shapes of the disks $B_{s_D}(z,t)$: they need not be homeomorphic to each other. For instance if $D = \R^2 \setminus \{ 0,1 \}$ it may happen that $B_{s_D}(z_1,t)$ is homeomorphic to the disk whereas $B_{s_D}(z_2,t)$ is homeomorphic to an annulus for some $z_2 \neq z_1$ and $t >0\,.$

Because of our interest in explicit stability estimates it seems to be a natural question to study Teichm\"uller's
result in the context of metric spaces equipped with a metric  more concrete than
the hyperbolic metric. An example of such a metric is the distance ratio
metric or the $j$-metric studied below. Our goal here is to study the extent to which
 Teichm\"uller's result can be generalized to ${\mathbb R }^3$.
Because a rotation around the $e_1$-axis leaves the whole $e_1$-axis and
in particular the triple $\{0,e_1,\infty\}$ fixed, we see that for a
fixed $x \in {\mathbb R }^3 \setminus \{ 0, e_1\}$ and $K\ge 1$ the set
\[
  V_K(x) = \{ f(x): f\in M_K({\mathbb R }^3) \}
\]
has rotational symmetry, it is a solid of revolution
with the $e_1$-axis as the symmetry axis. Moreover,  by Theorem \ref{main1} below, for a fixed $x$ when $K \to 1\,,$
this solid of revolution $ V_K(x)$ converges to a circle centered on the $e_1$-axis and perpendicular to the $e_1$-axis. If we look at the cross section of $ V_K(x)$ with a plane
containing $e_1$-axis, then Theorem \ref{main1} shows that the diameter of this cross section indeed does tend to $0$ with a quantitative upper bound in terms of $K-1$.

Very recently several authors have studied various extensions, ramifications and generalizations of Teichm\"uller's
problem. For the case of the unit ball in ${\mathbb R}^n$ see \cite{mv,vz} and for the case of Riemann surfaces
see \cite{btct}.

In Section \ref{preliminary} we review the necessary background information and
prove some auxiliary lemmas including Bernoulli type inequalities that will be used in the proof of the main results.

We next introduce our three main results. In the first result we study the cross section of $ V_K(x)$ with a plane
containing $e_1$-axis.

\begin{theorem} \label{main1}
  Fix $x \in \mathbb{R}^n \setminus\{0,e_1\}\,,$  let
  $$
  \Delta(x) = \min \{ |x|+|x-e_1|-1, 1 -||x|-|x-e_1|  |   \}/3\,,
  $$
and let  $\e \in (0,\E(x) )$ and 
$$K <1+ \left( (\log (\e/2+1))/62 \right)^2 \,.$$ 
If $f \in  QC_K(\R^n)$ with $f(z)=z$ for $z \in  \{0, e_1\}\,,$ then
there exists a M\"obius transformation, a rotation $h :\R^n \to \R^n$
around the $e_1$-axis, such that
$$
|h(f(x))-x| \le 2 \pi \max \{|x|,|x-e_1| \} \sqrt{\e} \,.
$$

\end{theorem}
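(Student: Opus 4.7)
My strategy combines (i) a quasiconformal distortion estimate, delivered by the quasiconformal Schwarz lemma and the bound on linear dilatation (the two workhorse tools named in the abstract), which controls how far the radial distances $|f(x)|$ and $|f(x)-e_1|$ can drift under $f$, with (ii) the rotational symmetry of the three-point normalization $\{0,e_1,\infty\}$. Geometrically, $f(x)$ is pinned near the $(n-2)$-sphere $S^{n-1}(0,|x|)\cap S^{n-1}(e_1,|x-e_1|)$ containing $x$, and a rotation $h$ about the $e_1$-axis will absorb the tangential component of the deviation, reducing the question to a comparison in a single meridional half-plane.

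The first step is to use the hypothesis $K-1<((\log(\e/2+1))/62)^2$, equivalently $e^{62\sqrt{K-1}}-1<\e/2$, to extract estimates of the form
\[
\bigl||f(x)|-|x|\bigr| \;\le\; c_0\sqrt{\e}\,\max\{|x|,|x-e_1|\}
\]
and the analogous one for $\bigl||f(x)-e_1|-|x-e_1|\bigr|$. The quasiconformal Schwarz lemma, applied at each of the two fixed finite points $0$ and $e_1$, gives multiplicative distortion factors of shape $e^{\pm 62\sqrt{K-1}}$, and the asymptotically sharp Bernoulli-type inequalities prepared in Section \ref{preliminary} convert these into additive bounds of the above shape with explicit absolute constants.

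Next I pass to cylindrical coordinates about the $e_1$-axis: write any $y\notin\R e_1$ as $y = p(y)\,e_1 + \rho(y)\,u(y)$, where $p(y)=y\cdot e_1$, $\rho(y)=|y-p(y)e_1|>0$, and $u(y)\perp e_1$ is a unit vector. Choose $h$ to be the rotation about the $e_1$-axis sending $u(f(x))$ to $u(x)$; then $h$ is a M\"obius transformation fixing $\{0,e_1,\infty\}$, and
\[
|h(f(x))-x|^2 = \bigl(p(f(x))-p(x)\bigr)^2 + \bigl(\rho(f(x))-\rho(x)\bigr)^2.
\]
Via the identities $p(y)=(1+|y|^2-|y-e_1|^2)/2$ and $\rho(y)^2 = |y|^2-p(y)^2$, each coordinate difference can be expressed in terms of the radial distortions just bounded, at which point elementary algebra produces the required estimates; the only subtle point is that $|\rho(f(x))-\rho(x)|$ involves division by $\rho(f(x))+\rho(x)$. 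This is where the hypothesis $\e<\Delta(x)$ enters: the factorization
\[
\rho(x)^2 = \tfrac14(|x|+|x-e_1|-1)(|x|+|x-e_1|+1)\bigl(1-\bigl||x|-|x-e_1|\bigr|\bigr)\bigl(1+\bigl||x|-|x-e_1|\bigr|\bigr)
\]
yields $\rho(x)\ge\tfrac32\Delta(x)$, and the previous distortion bounds keep $\rho(f(x))$ from collapsing as well.

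The main obstacle is to extract the square-root rate $\sqrt{\e}$ (rather than $\e$) from the exponential-in-$\sqrt{K-1}$ distortion input while absorbing all constants into the clean prefactor $2\pi$. This is where the sharp Bernoulli-type inequalities of Section \ref{preliminary} do their real work, in tandem with the lower bound on $\rho(x)$ above; the hypothesis $\e<\Delta(x)$ is used both to make that lower bound numerically effective and to guarantee that the radial coordinate estimate survives the division by $\rho(f(x))+\rho(x)$. The factor $\max\{|x|,|x-e_1|\}$ in the conclusion then arises naturally as the common bound on the radii and on the cross terms appearing in the algebra.
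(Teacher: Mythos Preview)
Your overall architecture matches the paper's: bound the two radial errors $\bigl||f(x)|-|x|\bigr|$ and $\bigl||f(x)-e_1|-|x-e_1|\bigr|$ from quasisymmetry (Proposition~\ref{qcestimate} together with Lemma~\ref{c3estimate}), then reduce to a half-plane picture via a rotation about the $e_1$-axis and use the circle-intersection formulas $p(y)=(1+|y|^2-|y-e_1|^2)/2$, $\rho(y)^2=|y|^2-p(y)^2$. These are exactly the coordinates the paper uses in Proposition~\ref{prop215} and Lemma~\ref{lemma216}.

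The gap is in how you obtain the $\sqrt{\e}$ rate. You locate it in the first step and in the Bernoulli-type inequalities, but in fact the quasisymmetry input, under the hypothesis $e^{62\sqrt{K-1}}-1<\e/2$, gives the \emph{linear} bound $\bigl||f(x)|-|x|\bigr|\le\e$ (and similarly centred at $e_1$); the Bernoulli-type lemmas of Section~\ref{preliminary} are not used here at all (they feed into Theorem~\ref{main2}). The square root appears only in the geometric step, and your proposed mechanism there---divide $\rho(f(x))^2-\rho(x)^2$ by $\rho(f(x))+\rho(x)\ge\tfrac32\Delta(x)$---yields an estimate of the shape $C\,\e\,M^2/\Delta(x)$ with $M=\max\{|x|,|x-e_1|\}$. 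This is \emph{not} dominated by $2\pi M\sqrt{\e}$ in general: for $M$ large and $\e$ near $\Delta(x)$ the ratio blows up. So as written the plan does not reach the stated conclusion.

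The paper avoids this by bounding instead the diameter of the whole cross-section $A(\e)\cap\mathbb{H}$ (Lemma~\ref{finalbd} via Lemma~\ref{lemma216}), using the elementary observation of Proposition~\ref{prop215}(2) that two points on a circle of radius $v$ whose first coordinates differ by $\epsilon$ lie at distance at most $\sqrt{2\epsilon v}$; this is precisely where $\sqrt{\e}$ enters, and the condition $\e<\Delta(x)$ is used only to ensure the four corner points exist (the annuli-intersection criterion), not as a denominator. In your coordinate setup the equivalent repair is simply to use $|\rho(f(x))-\rho(x)|\le\sqrt{|\rho(f(x))^2-\rho(x)^2|}$ (valid for nonnegative reals) rather than dividing by $\rho(f(x))+\rho(x)$; combined with $|\rho(f(x))^2-\rho(x)^2|=O(\e M^2)$ and $|p(f(x))-p(x)|=O(\e M)\le O(\sqrt{\e}M)$ this recovers a bound of the form $CM\sqrt{\e}$, though you would still have to do the bookkeeping to land on the constant $2\pi$.
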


Theorem \ref{main1} provides information
about the size of the set $V_K(x)$ when $K>1$ is close to $1.$ The proof
of this theorem combines a number of ideas. The first idea is to
show that the images of both of the spheres $S^{n-1}(0,|x|)$ and
$S^{n-1}(e_1,|x-e_1|)$ are contained in spherical annular domains, centered
at 0 and $e_1$, respectively, with
a good control and explicit bounds for the inner and outer radii of the annuli in each case. Therefore $V_K(x)$ is a subset of the intersection of these two annuli and it remains to estimate the size of the intersection in terms of radii. For simplicity
we require that the  intersection of the annuli does not contain points of the $e_1$-axis.
We give a sufficient condition for this requirement which we call the annuli intersection
criterion. Roughly speaking this criterion says that both boundary spheres of one of the annuli
intersect both boundary spheres of the other annulus.
In fact, the number $\E(x)$ in  Theorem \ref{main1} is connected with the annuli intersection criterion.

For the statement of our second main result, Theorem \ref{main2},
we introduce the so called distance ratio metric. Also here an asymptotically
sharp result is proved.
The \emph{distance ratio metric} or \emph{$j$-metric} in a proper subdomain $G$ of the Euclidean space $\Rn$, $n \ge 2$, is defined by
$$
  j_G(x,y) = \log \left( 1+\frac{|x-y|}{\min \{ d(x),d(y) \}}\right),
$$
where $d(x)$ is the Euclidean distance between $x$ and $\partial G$.
A slightly different definition of $ j_G$ was applied in \cite{gp} and the present
form of the definition stems from \cite{vu1}.
Asymptotically sharp explicit results, such as  Theorem \ref{main2}, are very few
in the literature on quasiconformal maps in ${\mathbb R}^n\,.$ The proof of
Theorem \ref{main2} relies on two ingredients: a sharp version of the Schwarz lemma for quasiconformal maps from \cite{avv2} and a sharp bound for the linear dilatation from \cite{vu2}. Theorem \ref{main2} appears to be new even for the case $n=2\,.$ We expect that Teichm\"uller's work cited above might be used to
prove this type of results, but the technical obstacles seem to be significant.

\begin{theorem}  \label{main2}
  Let $G = \Rn \setminus \{ 0 \}$, $f \in QC_K(\mathbb{R}^n)$, $K \in (1,2]$ and $f(0) = 0\,.$  There exists $c(K)$ such that for all $x,y \in G$
  \[
    j_G(f(x),f(y)) \le c(K) \max \{ j_G(x,y)^\alpha , j_G(x,y) \},
  \]
where $\alpha = K^{1/(1-n)}$, and $c(K) \to 1$ as $K \to 1$.
\end{theorem}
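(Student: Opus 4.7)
The plan is to reduce the $j$-metric inequality to a pointwise distortion estimate
\[
\frac{|f(x)-f(y)|}{\min\{|f(x)|,|f(y)|\}}\,\le\,c(K)\,\max\bigl\{t^\alpha,\,t\bigr\},\qquad t = \frac{|x-y|}{\min\{|x|,|y|\}},
\]
with $c(K)\to 1$ as $K\to 1$. Since $j_G(x,y)=\log(1+t)$ and the distance from $x$ to $\partial G = \{0\}$ equals $|x|$, the stated bound on $j_G$ then follows from this pointwise estimate by comparing $\log(1+ct)$ with $\log(1+t)$ and $\log(1+ct^\alpha)$ with $\log(1+t)^\alpha$, using the Bernoulli-type inequalities developed in Section~\ref{preliminary}. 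I would assume without loss of generality that $|x|\le|y|$, so that $\min\{|x|,|y|\}=|x|$.

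To bound the numerator $|f(x)-f(y)|$ I would invoke the sharp Schwarz lemma for $K$-quasiconformal maps of $\R^n$ from \cite{avv2}. Because $f$ fixes $0$ and $\infty$, applying this lemma to the triple $(0,x,y)$ yields an explicit estimate
\[
\frac{|f(x)-f(y)|}{|f(x)|}\,\le\,\varphi_{K,n}\!\left(\frac{|x-y|}{|x|}\right),
\]
where the distortion function satisfies $\varphi_{K,n}(s)\le c_1(K)\,s^\alpha$ for $s\le 1$ and $\varphi_{K,n}(s)\le c_1(K)\,s$ for $s\ge 1$, with $c_1(K)\to 1$ as $K\to 1$. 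This is exactly where the H\"older exponent $\alpha = K^{1/(1-n)}$ enters.

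To control the denominator, the issue is that $f$ may reverse the order $|x|\le|y|$, so that $\min\{|f(x)|,|f(y)|\}$ could equal $|f(y)|<|f(x)|$ and the Schwarz-lemma estimate above would control the wrong quantity. Here I would apply the sharp bound for the linear dilatation of $f$ from \cite{vu2}: for a $K$-quasiconformal self-map of $\R^n$ fixing $0$ and $\infty$, the ratio $|f(x)|/|f(y)|$ is a quasi-monotone function of $|x|/|y|$, up to a multiplicative factor $\lambda(K,n)\to 1$ as $K\to 1$. In particular, whichever of $|f(x)|,|f(y)|$ is smaller differs from $|f(x)|$ by at most $\lambda(K,n)$, so in the target inequality the denominator may be replaced by $|f(x)|$ at a cost vanishing as $K\to 1$.

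Combining the two ingredients yields the pointwise distortion estimate with $c(K)=c_1(K)\,\lambda(K,n)\to 1$. The conversion from $t$ to $j_G=\log(1+t)$ is then routine: on the linear branch the classical Bernoulli inequality $(1+t)^c\ge 1+ct$ for $c\ge 1$ gives $\log(1+ct)\le c\log(1+t)$, while on the H\"older branch an elementary argument of Bernoulli type compares $\log(1+ct^\alpha)$ with $c\log(1+t)^\alpha$ uniformly in $t$. The main technical obstacle, beyond invoking the two cited sharp distortion results, lies in handling the transitional regime $t\approx 1$ cleanly so that the combined constant provably tends to $1$; this is precisely the role of the auxiliary Bernoulli-type inequalities prepared in Section~\ref{preliminary}.
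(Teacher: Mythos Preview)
Your overall plan is close in spirit to the paper's, but there is one genuine error and one structural difference worth noting.

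\medskip

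\textbf{The error.} You claim the distortion function satisfies $\varphi_{K,n}(s)\le c_1(K)\,s$ for $s\ge 1$. This is false: for $s\ge 1$ the sharp quasisymmetry bound is $\eta^*_{K,n}(s)\le c_1(K)\,s^{\beta}$ with $\beta=K^{1/(n-1)}=1/\alpha>1$ (see \eqref{roughupperbound} and Proposition~\ref{qcestimate}). So for $t=|x-y|/|x|\ge 1$ you do \emph{not} land on a ``linear branch'' where classical Bernoulli $\log(1+ct)\le c\log(1+t)$ suffices; instead you must bound $\log(1+c_3 t^{\beta})$. This is exactly what Lemma~\ref{genbernoulli}(3),(4) is for: it gives $\log(1+c_3 t^{\beta})\le c_3\beta\log(1+t)$, and the extra factor $\beta$ (harmless since $\beta\to 1$) is precisely why the paper's constant is $c(K)=c_3/\alpha=c_3\beta$ rather than $c_3$. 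Your argument as written would produce the wrong constant and an unjustified step.

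\medskip

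\textbf{The structural difference.} Your denominator-correction step (bounding $|f(x)|/|f(y)|$ via linear dilatation in case the order flips) works, but is unnecessary. The paper avoids it by first normalizing via homothety so that $x=e_1$, $f(e_1)=e_1$, and $|y|\ge 1$, and then applying quasisymmetry to \emph{two} triples: $(a,b,c)=(y,e_1,0)$ gives $|f(y)-e_1|\le\eta(|y-e_1|)$, while $(a,b,c)=(e_1,y,0)$ gives $|f(y)-e_1|/|f(y)|\le\eta(|y-e_1|/|y|)$. Since $|y|\ge 1$ the maximum of the two right-hand sides is $\eta(|y-e_1|)$, and one obtains directly
\[
j_G(f(e_1),f(y))\le\log\bigl(1+\eta(|y-e_1|)\bigr)
\]
with no separate correction. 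Note also that the Schwarz lemma from \cite{avv2} and the linear dilatation bound from \cite{vu2} are not invoked as two separate steps in the proof of Theorem~\ref{main2}; they are the ingredients behind the sharp $\eta^*_{K,n}$ estimates in Proposition~\ref{qcestimate}, which is what the proof actually calls.
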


Our third main result yields a counterpart of Theorem  \ref{main2} for the quasihyperbolic metric.
The {\em quasihyperbolic metric} of $G$ is defined by
the quasihyperbolic length minimizing property
\begin{equation}\label{qhmetric}
k_G(x,y)=\inf_{\gamma\in \Gamma(x,y)} \ell_k(\gamma),
\quad \ell_k(\gamma) =\int_\gamma \frac{|dz|}{d(z)}\,,
\end{equation}
where $\ell_k(\gamma)$ is the quasihyperbolic length of
$\gamma$ (cf. \cite{gp}) and $d(z)$ stands for the distance $d(z,\partial G)$ of
$z \in G$ to the boundary. It is well-known that \cite[Lemma 2.1]{gp}
$$
k_G(x,y) \ge j_G(x,y)
$$
for all $x,y \in G\,.$ Gehring and Osgood proved the following quasi-invariance property
of the quasihyperbolic metric.

\begin{theorem} ( \cite[Theorem 3]{go}) \label{gothm}
For $n \ge 2, K\ge1\,,$ there exists a constant $c=c(n,K)$ depending only on $n$ and $K$ such that the following
holds.
 If $f:G\to G'$ is a $K-$quasiconformal homeomorphism between domains $G,G'\subset \mathbb{R}^n$ and $x,y\in G\,,$ then
\[
k_{G'}(f(x)f(y))\leq \max\{k_G(x,y)^{\alpha},k_G(x,y)\}, \, \alpha=K^{1/(1-n)}.
\]
\end{theorem}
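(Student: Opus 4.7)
The plan is to combine a local Hölder estimate coming from the quasiconformal Schwarz lemma with a chain argument along quasihyperbolic near-geodesics. The key input is the local distortion bound: there exists $c_0 = c_0(n,K)$ such that whenever $x \in G$ and $|x-y| \le d(x,\partial G)/2$,
\begin{equation*}
|f(y)-f(x)| \le c_0 \, d(f(x), \partial G') \left( \frac{|x-y|}{d(x,\partial G)} \right)^\alpha, \qquad \alpha = K^{1/(1-n)}.
\end{equation*}
This follows by applying a Schwarz-type lemma to the restriction of $f$ to the ball $B^n(x, d(x,\partial G)/2)$, whose image lies in $G'$ by the choice of radius.

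First, I would convert this into a local quasihyperbolic comparison. Choose $\e_0 > 0$ small so that $k_G(x,y) \le \e_0$ forces $|x-y| \le d(x,\partial G)/4$; in this regime, integrating $|dz|/d(z,\partial G)$ along the segment $[x,y]$ gives $k_G(x,y) \le 2|x-y|/d(x,\partial G)$, while $k_G \ge j_G$ yields the reverse inequality up to a multiplicative constant. Combining the displayed Hölder bound with the elementary estimate $k_{G'}(u,v) \le 2|u-v|/d(u,\partial G')$, valid when $|u-v| \le d(u,\partial G')/2$, one obtains
\begin{equation*}
k_{G'}(f(x),f(y)) \le c_1(n,K) \, k_G(x,y)^\alpha \quad \text{whenever } k_G(x,y) \le \e_0,
\end{equation*}
after possibly shrinking $\e_0$ so that $|f(y)-f(x)| \le d(f(x),\partial G')/2$ and the second local bound applies.

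Second, a chain argument extends the estimate to the large-scale regime. For arbitrary $x,y \in G$ and $\eta > 0$, pick a rectifiable path from $x$ to $y$ of quasihyperbolic length at most $k_G(x,y)+\eta$ and partition it at points $x=z_0, z_1, \dots, z_N=y$ so that $k_G(z_{i-1},z_i) \le \e_0$ and $N \le k_G(x,y)/\e_0 + 1$. By the triangle inequality and Step 1,
\begin{equation*}
k_{G'}(f(x),f(y)) \le \sum_{i=1}^N k_{G'}(f(z_{i-1}),f(z_i)) \le c_1 N \e_0^\alpha \le c_2(n,K) \bigl( k_G(x,y) + \e_0 \bigr),
\end{equation*}
which delivers the linear bound once $k_G(x,y) \ge \e_0$. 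Taking the maximum of the two regimes produces the claimed inequality with a single constant $c = c(n,K)$.

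The principal obstacle is the first step: the quasiconformal Schwarz lemma rests on modulus-of-curve-family estimates and Mori-type spherical distortion, and obtaining the correct exponent $\alpha = K^{1/(1-n)}$ in the local inequality requires careful use of the extremal modulus of the Gr\"otzsch ring. Once this local input is secured, the chain argument in Step 2 is a routine summation along a near-geodesic, with the mild bookkeeping of letting $\eta \to 0$ and absorbing the transition zone $k_G(x,y) \in [\e_0, 1]$ into the constant.
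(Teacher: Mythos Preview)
The paper does not supply its own proof of this statement: Theorem~\ref{gothm} is simply quoted from Gehring--Osgood \cite{go} as background for Theorem~\ref{main3}, with no argument given. Your sketch is essentially the original Gehring--Osgood proof --- a local H\"older estimate derived from a quasiconformal Schwarz/Mori-type lemma, followed by subdivision along a quasihyperbolic (near-)geodesic and summation --- so there is nothing in the paper to compare it against.

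That said, the paper does reproduce exactly your chain argument in its proof of Theorem~\ref{main3} (Case~B): points $x_0,\dots,x_{p+1}$ are chosen on a quasihyperbolic geodesic at fixed $k_G$-spacing $\mu(\lambda)$, the local estimate is applied to each piece, and the sum is controlled via $p \le k_G(x,y)/\mu(\lambda)$ together with a weighted-mean inequality. The only difference from your Step~2 is the local input: the paper feeds in its Theorem~\ref{main2} (specific to $G = \mathbb{R}^n \setminus \{0\}$, with an asymptotically sharp constant) in place of the general Schwarz-type bound you invoke. Your outline is correct as a proof strategy; as you yourself note, the substantive work lies in establishing the local distortion inequality with exponent $\alpha = K^{1/(1-n)}$ via Gr\"otzsch ring modulus estimates, and that is the core of the cited Gehring--Osgood argument rather than anything carried out in this paper.
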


 It can be easily shown
that the quasihyperbolic metric is not invariant under M\"obius transformations of the
unit ball onto itself and hence the constant $c$ in Theorem \ref{gothm} cannot be
asymptotically sharp when $K \to 1\,.$ In other words, $c \nrightarrow 1$ when $K \to 1\,.$
In the third main result, Theorem \ref{main3}, we show that, however, for the
special domain  $G= {\mathbb R}^n \setminus \{ 0\}$ we have a result with the
asymptotically sharp constants. This result is new also in the case $n = 2$.

\begin{theorem}\label{main3}
For given $K \in (1,2]$ and $n\ge 2$ there exists a constant $\omega(K,n)$
such that if $G= {\mathbb R}^n \setminus \{ 0\}$ and $f: {\mathbb R}^n \to
{\mathbb R}^n $ is a $K$-quasiconformal mapping with $f(0)=0\,,$ then
for all $x,y\in G$
\[k_G(f(x),f(y)) \le \omega(K,n) \max \{ k_G(x,y)^{\alpha},  k_G(x,y) \}
\]
where $\alpha= K^{1/(1-n)}$ and $\omega(K,n) \to 1$ when $K \to 1\,.$
\end{theorem}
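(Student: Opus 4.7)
\medskip

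\textbf{Proof plan.} The plan is to exploit the unusually explicit form the quasihyperbolic metric takes on $G = \R^n \setminus \{0\}$. Because $d(x,\partial G) = |x|$ for every $x \in G$, the map $\phi \colon G \to \R \times S^{n-1}$ defined by $\phi(x) = (\log|x|,\, x/|x|)$ is an isometry from $(G, k_G)$ onto $(\R \times S^{n-1}, dt^2 + d\omega^2)$, yielding the closed-form expression
\[
k_G(x,y)^2 = \Bigl(\log\tfrac{|y|}{|x|}\Bigr)^{2} + \theta(x,y)^2, \qquad \theta(x,y) = \arccos\frac{x\cdot y}{|x|\,|y|}.
\]
Consequently the task reduces to controlling separately the radial quantity $|\log(|f(y)|/|f(x)|)|$ and the angular quantity $\theta(f(x),f(y))$, and then recombining them under the square root.

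For the radial part I will invoke the sharp quasiconformal Schwarz lemma from \cite{avv2} that was already the main input for Theorem \ref{main2}; together with the hypothesis $f(0)=0$ and the automatic extension $f(\infty)=\infty$ recorded in the introduction, it will produce a bound of the form
\[
\Bigl|\log\tfrac{|f(y)|}{|f(x)|}\Bigr| \le \omega_1(K,n)\,\max\!\Bigl\{\Bigl|\log\tfrac{|y|}{|x|}\Bigr|^{\alpha},\, \Bigl|\log\tfrac{|y|}{|x|}\Bigr|\Bigr\}
\]
with $\omega_1(K,n) \to 1$ as $K \to 1$. For the angular part I will use the sharp linear-dilatation estimate from \cite{vu2}, which, applied at the fixed points $0$ and $\infty$, controls $\max_{|z|=r}|f(z)|/\min_{|z|=r}|f(z)|$ on every sphere $S^{n-1}(0,r)$ uniformly by a constant $H(K,n)\to 1$. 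Combined with the law of cosines on the spheres $S^{n-1}(0,|x|)$ and $S^{n-1}(0,|f(x)|)$, and, when $|x|\ne |y|$, with an auxiliary radial step that moves $x$ onto the sphere through $y$, this will yield the analogous angular inequality
\[
\theta(f(x),f(y)) \le \omega_2(K,n)\,\max\!\bigl\{\theta(x,y)^{\alpha},\,\theta(x,y)\bigr\}
\]
with $\omega_2(K,n) \to 1$ as $K \to 1$.

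Finally, I will insert these two estimates into the Pythagorean formula above and apply the elementary bound $(a^2+b^2)^{1/2}\le \sqrt{2}\max\{a,b\}$ together with the subadditivity of $t\mapsto t^{\alpha}$ on $[0,\infty)$ for $\alpha\in(0,1]$; this will deliver the desired inequality with an explicit $\omega(K,n) = C_n\max\{\omega_1(K,n),\omega_2(K,n)\}$ that tends to $1$ as $K\to 1$. The main obstacle will be the angular estimate: the linear-dilatation theorem controls \emph{radii} on each sphere rather than angles, so converting it into an angular bound, and in particular matching the same exponent $\alpha$ that appears in the radial estimate, is the delicate point. That the asymptotic sharpness $\omega(K,n)\to 1$ survives this conversion is a genuine feature of the punctured space: it is precisely the polar isometry $\phi$ that fails on the unit ball, which, as observed before the theorem, is why the corresponding constant does not tend to $1$ in Theorem \ref{gothm}.
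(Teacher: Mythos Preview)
Your approach diverges from the paper's, which does not use the Martin--Osgood formula in proving Theorem~\ref{main3}. Instead the paper bootstraps from the $j$-metric estimate of Theorem~\ref{main2}: for points with $k_G(x,y)$ below a threshold $\mu(\lambda)$ one checks via Theorem~\ref{main2} that $|f(x)-f(y)|/|f(x)|\le \lambda/(1-\lambda)$, so the local comparison $k_G\le c_1(\lambda)j_G$ of Lemma~\ref{ksvbd} applies on both sides; for larger $k_G(x,y)$ one subdivides the quasihyperbolic geodesic into pieces of length $\mu(\lambda)$, applies the short-distance bound to each piece, and sums using the power-mean inequality $\sum u_j^\alpha\le (p+1)^{1-\alpha}(\sum u_j)^\alpha$. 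The free parameter $\lambda$ is then tied to $K$ (the paper takes $\lambda=\beta-1$) to force the resulting constant to $1$.

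Your polar decomposition has a genuine gap. The two component inequalities you state cannot hold, because $f$ does not respect the radial/angular splitting: if $|x|=|y|$ your radial right-hand side vanishes, yet there is no reason for $|f(x)|=|f(y)|$; the linear-dilatation bound gives only $|\log(|f(y)|/|f(x)|)|\le\log H(K,n)$, which is small but nonzero. Symmetrically, if $x,y$ lie on a common ray through $0$ then $\theta(x,y)=0$ while $\theta(f(x),f(y))$ is generically nonzero. Any viable version must therefore carry cross-terms, and extracting the angular half with the exact exponent $\alpha$ and constant $\omega_2\to 1$ from the linear-dilatation theorem is precisely the step your plan leaves unexplained. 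There is also a more elementary problem in the recombination: invoking $(a^2+b^2)^{1/2}\le\sqrt{2}\max\{a,b\}$ and the subadditivity of $t\mapsto t^\alpha$ inserts fixed numerical factors independent of $K$, so $\omega(K,n)=C_n\max\{\omega_1,\omega_2\}$ with $C_n\ge\sqrt{2}$ cannot tend to $1$. The paper's geodesic-subdivision argument sidesteps both issues by never separating the radial and angular contributions.
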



\section{Preliminary results}\label{preliminary}

When $s > 1$ for the \emph{Gr\"otzsch capacity} we use the notation
$ \gamma_n(s)$ as in \cite[p. 88]{vu1}. Then for the planar case we
have by \cite[p.66]{vu1} $\gamma_2(s) = 2\pi / \mu(1/s)$, where
\[
  \mu (r) = \frac{\pi}{2}\frac{\K(\sqrt{1-r^2})}{\K(r)} \quad \textnormal{and} \quad \K(r) = \int_0^1 \frac{dx}{\sqrt{(1-x^2)(1-r^2x^2)}}
\]
for $r \in (0,1)$. We define for $r \in (0,1)$ and $K > 0$
\[
  \p_{K,n}(r) = \frac{1}{\gamma_n^{-1}(K\gamma_n(1/r))}.
\]

Let $\eta \colon [0,\infty) \to [0,\infty)$ be an increasing homeomorphism and $D,D' \subset \Rn$. A homeomorphism $f \colon D \to D'$ is \emph{$\eta$-quasisymmetric} if
\begin{equation}\label{qsdefinition}
  \frac{|f(a)-f(c)|}{|f(b)-f(c)|} \le \eta \left( \frac{|a-c|}{|b-c|} \right)
\end{equation}
for all $a,b,c \in D$ and $c \ne b$. By \cite{v2} a $K$-quasiconformal
mapping of the whole space $\Rn$ is $\eta_{K,n}$-quasisymmetric with a
control function $\eta_{K,n}$. Let us define the optimal control
function by
\[
  \eta^*_{K,n}(t) = \sup \{ |f(x)| \colon |x| \le t, f \in QC_K(\Rn), f(y) = y \textnormal{ for } y \in \{ 0,e_1,\infty \} \}.
\]

Vuorinen \cite[Theorem 1.8]{vu2} proved an upper bound for
$\eta^*_{K,n}(t)$, which was later refined in  \cite[Theorem 14.8]{avv} for $K \ge 1$ into the following form
\begin{equation} \label{etabd}
  \eta^*_{K,n}(t) \le \left\{ \begin{array}{ll}
    \displaystyle \eta^*_{K,n}(1)\p_{K,n}(t), & 0 < t < 1,\\
    \displaystyle  \exp(4K(K+1)\sqrt{K-1}) , & t = 1,\\
    \displaystyle \eta^*_{K,n}(1)\frac{1}{\p_{1/K,n}(1/t)}, & t > 1.
  \end{array} \right.
\end{equation}

These bounds could be further refined (see \cite[14.36(4)]{avv}, \cite{p}, \cite{s0}), but we use simpler bounds. A simplified, but still asymptotically sharp upper bound for
 $\eta^*_{K,n}(t)$ can be written as follows
\begin{equation}\label{roughupperbound}
  \eta^*_{K,n}(t) \le \left\{ \begin{array}{ll}
    \displaystyle \eta^*_{K,n}(1)\lambda_n^{1-\alpha}t^\alpha, & 0 < t \le 1,\\
    \displaystyle \eta^*_{K,n}(1)\lambda_n^{\beta-1}t^{\beta}, & t > 1,
  \end{array} \right.
\end{equation}
where $\alpha = K^{1/(1-n)}$ and $\beta = 1/\alpha$. Furthermore, by \cite[Lemma 7.50]{vu1} we have the following inequalities
\begin{equation}\label{upperboundsoflambda}
  \lambda_n^{1-\alpha} \le 2^{1-1/K}K \quad \textnormal{and} \quad \lambda_n^{1-\beta} \ge 2^{1-K}K^{-K}.
\end{equation}

Recently refined versions of \eqref{etabd}  have been applied by M. Badger, J.T. Gill, S. Rohde and T. Toro \cite{bgrt} and
I. Prause \cite{p} to find upper bounds for the Hausdorff dimension of quasiconformal images of spheres.

\begin{proposition}\label{qcestimate}
  Let $K \in (1,2]$, $f \in QC_K(\Rn)$, $f(x)=x$ for $x \in \{ 0,e_1 \}$, $\alpha = K^{1/(1-n)}$ and $\beta = 1/\alpha$. Then
  \[
    \begin{array}{ll}
      \displaystyle \frac{1}{c_3}|x|^\beta \le |f(x)| \le c_3 |x|^\alpha, & {if }\,\, 0<|x|\le1,\\
      \displaystyle \frac{1}{c_3}|x|^\alpha \le |f(x)| \le c_3 |x|^\beta, & {if } \,\, |x|>1
    \end{array}
  \]
  for $c_3 = \exp (60 \sqrt{K-1})$.
\end{proposition}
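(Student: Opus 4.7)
The plan is to convert the quasisymmetric estimates \eqref{etabd}--\eqref{upperboundsoflambda} into explicit two-sided bounds. Since $f \in QC_K(\R^n)$ fixes $\{0, e_1, \infty\}$ and the same is true of $f^{-1}$, the definition of $\eta^*_{K,n}$ gives at once
\[
|f(x)| \le \eta^*_{K,n}(|x|) \qquad \text{and} \qquad |x| \le \eta^*_{K,n}(|f(x)|),
\]
which will yield the upper bounds directly and, after inversion, the lower bounds.

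For the upper bounds I would apply \eqref{roughupperbound} with $t = |x|$: this reproduces exactly the splitting of the proposition, giving $|f(x)| \le \eta^*_{K,n}(1)\,\lambda_n^{1-\alpha}\,|x|^\alpha$ when $|x| \le 1$ and $|f(x)| \le \eta^*_{K,n}(1)\,\lambda_n^{\beta-1}\,|x|^\beta$ when $|x| > 1$. It remains to dominate both prefactors by $c_3 = \exp(60\sqrt{K-1})$. The middle case of \eqref{etabd}, combined with $4K(K+1) \le 24$ on $(1,2]$, yields $\eta^*_{K,n}(1) \le \exp(24\sqrt{K-1})$. From \eqref{upperboundsoflambda} one has $\lambda_n^{1-\alpha} \le 2^{1-1/K}K$ and $\lambda_n^{\beta-1} \le 2^{K-1}K^K$; taking logarithms and applying the elementary estimates $\log K \le K-1$, $K\log K \le 2(K-1)$, and $K - 1 \le \sqrt{K-1}$ (the last valid precisely because $K \in (1,2]$) converts each factor into $\exp(c_0\sqrt{K-1})$ with a small absolute constant $c_0$. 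The total exponent stays well under $60$.

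For the lower bounds, applying \eqref{roughupperbound} to $f^{-1}$ gives $|x| \le C|f(x)|^\alpha$ if $|f(x)| \le 1$ and $|x| \le C'|f(x)|^\beta$ if $|f(x)| > 1$, where $C, C'$ are the prefactors just controlled. Inversion produces $|f(x)| \ge |x|^\beta/C^\beta$ and $|f(x)| \ge |x|^\alpha/(C')^\alpha$ respectively. Since $\beta = K^{1/(n-1)} \le K \le 2$ for $K \in (1,2]$, the worst accumulated constant is $C^\beta \le \exp(52\sqrt{K-1}) < c_3$, and $(C')^\alpha \le C' < c_3$. A four-way case analysis on the signs of $|x|-1$ and $|f(x)|-1$ then delivers the proposition: the case $|x| \le 1$, $|f(x)| > 1$ is automatic since $|f(x)| > 1 \ge |x|^\beta$; the case $|x| > 1$, $|f(x)| \le 1$ uses the inequality $|x|^\beta \ge |x|^\alpha$ to upgrade the stronger bound $|f(x)| \ge |x|^\beta/C^\beta$ into the required $|f(x)| \ge |x|^\alpha/c_3$; and the two same-side cases come directly from the relevant branch of the inversion. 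The principal obstacle is precisely this bookkeeping: tracking which exponent ($\alpha$ or $\beta$) the inversion produces in each quadrant of the case-split and verifying that the accumulated constants, possibly raised to the power $\beta \le 2$, still fit inside $\exp(60\sqrt{K-1})$.
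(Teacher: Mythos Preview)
Your argument is correct but takes a noticeably more roundabout route to the lower bound than the paper does. The paper never invokes $f^{-1}$: instead it applies the quasisymmetry inequality \eqref{qsdefinition} with the triple $(a,b,c)=(e_1,x,0)$, which yields
\[
\frac{|f(e_1)-f(0)|}{|f(x)-f(0)|}\le \eta^*_{K,n}\!\left(\frac{|e_1-0|}{|x-0|}\right),
\qquad\text{i.e.}\qquad
|f(x)|\ge \frac{1}{\eta^*_{K,n}(1/|x|)}.
\]
Since the argument of $\eta^*_{K,n}$ here is $1/|x|$, its position relative to $1$ is determined by $|x|$ alone, so plugging \eqref{roughupperbound} in immediately gives $|f(x)|\ge |x|^\beta/c_2$ for $|x|\le 1$ and $|f(x)|\ge |x|^\alpha/c_1$ for $|x|>1$, with the \emph{same} constants $c_1,c_2$ as in the upper bound. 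No four-way case split on $(|x|,|f(x)|)$ is needed, and the constants are never raised to the power $\beta$. Your approach through $f^{-1}$ works too, because $\beta\le 2$ on $(1,2]$ keeps the inflated constant below $c_3$; but the paper's choice of triple both shortens the proof and explains why the single constant $c_3=\max\{c_1,c_2\}$ suffices without the extra headroom.
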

\begin{proof}
  Since $f$ is quasiconformal it is also $\eta^*_{K,n}$-quasisymmetric and by choosing $a=x$, $b=0$ and $c=e_1$ in (\ref{qsdefinition}) we have $|f(x)| \le \eta^*_{K,n}(|x|)$. Similarly, selection $(a,b,c) = (e_1,x,0)$ in (\ref{qsdefinition}) gives $|f(x)| \ge 1/\eta^*_{K,n}(1/|x|)$. Therefore
  \begin{equation}\label{etabounds}
    \frac{1}{\eta^*_{K,n}(1/|x|)} \le |f(x)| \le \eta^*_{K,n}(|x|)
  \end{equation}
  for all $x \in \overline{\R}^n \setminus \{ 0 \}$. Therefore by (\ref{roughupperbound})
  \[\begin{array}{ll}  
    \displaystyle \frac{1}{c_2}|x|^\beta \le |f(x)| \le c_1 |x|^\alpha, &\textnormal{if }\,\, 0<|x|<1,\\
    \displaystyle \frac{1}{\eta^*_{K,n}(1)} \le |f(x)| \le \eta^*_{K,n}(1), & \textnormal{if }\,\, |x| = 1,\\
    \displaystyle \frac{1}{c_1}|x|^\alpha \le |f(x)| \le c_2 |x|^\beta, & \textnormal{if } |x|>1,\\
  \end{array}\]
  for $c_1 = \eta^*_{K,n}(1)\lambda_n^{1-\alpha}$ and $c_2= \eta^*_{K,n}(1)\lambda_n^{\beta-1}$. We can estimate $\max \{ c_1,c_2 \} \le c_3 = \exp (60 \sqrt{K-1})$ for $K \in (1,2]$.
\end{proof}

\begin{remark} In the case $n=2$ the result \cite[Lemma 1]{gl} gives bounds
\[
  \begin{array}{ll}
      \displaystyle \frac{1}{c_4}|x|^\beta \le |f(x)|, & \textnormal{if } 0<|x|\le1,\\
      \displaystyle |f(x)| \le c_4 |x|^\beta, & \textnormal{if } |x|>1,
    \end{array}
\]
where $f$ is as in Proposition \ref{qcestimate} and $c_4 = \exp(7K)$. Proposition \ref{qcestimate} improves this result for small $K$. Since $7K \ge 60 \sqrt{K-1}$ is equivalent to
\[
  K \le \frac{60}{49} \left( 30-\sqrt{851} \right) \approx 1.0139947,
\]
Proposition \ref{qcestimate} gives better result for $K \in (1,1.0399]$.

\end{remark}

\begin{lemma} \label{vesnalemma} If $0<a<1<b$, $q=\sqrt{\frac{b-1}{1-a}}$ and
$m\geq\max\{q,q^{-1}\}$, then
\begin{equation} \label{v1}
 mt^a-t\geq t-\frac{t^b}m
 \end{equation}
holds for $0<t\leq 1$ and
\begin{equation} \label{v2}
 mt^b-t\geq t-\frac{t^a}m
 \end{equation}
holds for $t\geq 1$.
\end{lemma}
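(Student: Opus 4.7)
The plan is to reduce both (\ref{v1}) and (\ref{v2}) to showing that a strictly convex function of one real variable is bounded below by its value at the origin. After the routine rearrangement, inequality (\ref{v1}) reads $mt^{a}+t^{b}/m\ge 2t$ on $(0,1]$; I would substitute $t=e^{-s}$ with $s\ge 0$ and divide through by $e^{-s}$. Writing $u=1-a$ and $v=b-1$ (so $q^{2}=v/u$), this turns (\ref{v1}) into
\[
  \Phi(s)\;:=\;m e^{us}+\tfrac{1}{m}e^{-vs}\;\ge\;2\qquad (s\ge 0).
\]
Now $\Phi''(s)=mu^{2}e^{us}+(v^{2}/m)e^{-vs}>0$, so $\Phi$ is strictly convex and $\Phi'$ is increasing on $[0,\infty)$. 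It therefore suffices to verify $\Phi(0)\ge 2$ and $\Phi'(0)\ge 0$: the first is the AM--GM inequality $m+1/m\ge 2$, while the second, $mu-v/m\ge 0$, is equivalent to $m^{2}\ge v/u=q^{2}$, which follows from the hypothesis $m\ge\max\{q,q^{-1}\}\ge q$.

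For (\ref{v2}), which after rearrangement reads $mt^{b}+t^{a}/m\ge 2t$ on $[1,\infty)$, the dual substitution $t=e^{s}$ with $s\ge 0$ and division by $e^{s}$ converts the inequality into
\[
  \Psi(s)\;:=\;m e^{vs}+\tfrac{1}{m}e^{-us}\;\ge\;2\qquad (s\ge 0).
\]
The same convexity argument applies verbatim: $\Psi$ is strictly convex with $\Psi(0)=m+1/m\ge 2$, and the requirement $\Psi'(0)=mv-u/m\ge 0$ amounts to $m^{2}\ge u/v=q^{-2}$, which is precisely the other half of the hypothesis, $m\ge\max\{q,q^{-1}\}\ge q^{-1}$.

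In summary, the whole content of the lemma is that the condition $m\ge\max\{q,q^{-1}\}$ encodes exactly the two slope-at-zero inequalities $\Phi'(0)\ge 0$ and $\Psi'(0)\ge 0$ needed for the convexity--monotonicity argument, the two halves of the hypothesis corresponding to the small-$t$ and large-$t$ regimes respectively. I do not anticipate any genuine obstacle: once the exponential substitution is chosen, each inequality collapses to checking a single sign together with an application of AM--GM.
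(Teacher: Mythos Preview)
Your argument is correct and is essentially the paper's proof in exponential coordinates: the paper divides \eqref{v1} by $t$ to obtain $f(t)=mt^{a-1}+t^{b-1}/m\ge 2$, computes the unique critical point $t_0=\bigl(m^2(1-a)/(b-1)\bigr)^{1/(b-a)}$, and uses $m\ge q$ to get $t_0\ge 1$, so that $f$ is decreasing on $(0,1]$ with $f(1)=m+1/m\ge 2$. Your substitution $t=e^{-s}$ recasts the same monotonicity check as convexity of $\Phi$ together with the sign of $\Phi'(0)$, and your treatment of \eqref{v2} makes explicit that the other half $m\ge q^{-1}$ of the hypothesis is what is used there, which the paper leaves to the reader.
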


\begin{proof}  With $f(t)=mt^{a-1}+\frac{t^{b-1}}{m}$ we see that
the first inequality is equivalent to $f(t)\geq 2$ for $0<t\le 1$.
By differentiating we conclude that the function $f$ is decreasing
for
$$0<t\leq\left(\frac{m^2(1-a)}{b-1}\right)^{1/(b-a)}\equiv t_0.$$
Therefore the function $f$ has a minimum at $t_0.$ By the choice of
$m$ we see that $t_0\ge 1$ and hence for all $t\in (0,1]$
$$f(t)\ge f(1)=m+m^{-1}\ge 2.$$ The first inequality (\ref{v1}) is proved and the proof
for (\ref{v2}) is similar. \end{proof} 

\begin{lemma}\label{c3estimate}
  Let $n \ge2$, $K>1$, $\alpha = K^{1/(1-n)}$, $\beta=1/\alpha$ and $c_3 = \sqrt{\beta}$. For $t \in (0,1)$
  \begin{equation}\label{c3estimate1}
    c_3 t^\alpha -t \ge t-\frac{t^\beta}{c_3}
  \end{equation}
  and for $t > 1$
  \begin{equation}\label{c3estimate2}
    c_3 t^\beta -t \ge t-\frac{t^\alpha}{c_3}.
  \end{equation}
Moreover, both (\ref{c3estimate1}) and (\ref{c3estimate2}) hold for
all constants $c_4 \ge c_3,$ e.g. for $c_4=e^{60\sqrt{K-1}}.$
\end{lemma}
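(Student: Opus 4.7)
The plan is to invoke Lemma \ref{vesnalemma} with the choices $a=\alpha$, $b=\beta$, $m=c_3$, so everything reduces to checking the hypothesis $m\ge \max\{q,q^{-1}\}$ for $q=\sqrt{(b-1)/(1-a)}$. Since $\alpha\in(0,1)$ and $\beta=1/\alpha>1$, the quantities $1-a=1-\alpha$ and $b-1=\beta-1$ are both positive, so $q$ is well defined.

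The key computation is
$$
q^2 \;=\; \frac{\beta-1}{1-\alpha} \;=\; \frac{1/\alpha-1}{1-\alpha}\;=\; \frac{(1-\alpha)/\alpha}{1-\alpha}\;=\; \frac{1}{\alpha}\;=\; \beta,
$$
so $q=\sqrt{\beta}$ and $q^{-1}=\sqrt{\alpha}$. Since $\alpha<1<\beta$, we have $\max\{q,q^{-1}\}=\sqrt{\beta}=c_3$, and the hypothesis of Lemma \ref{vesnalemma} is satisfied with equality. Applying that lemma, \eqref{v1} yields \eqref{c3estimate1} for $t\in(0,1]$, and \eqref{v2} yields \eqref{c3estimate2} for $t\ge 1$.

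For the ``moreover'' part, observe that if $c_4\ge c_3$ then $c_4\ge \max\{q,q^{-1}\}$ as well, so Lemma \ref{vesnalemma} applies with $m=c_4$ and gives both inequalities with $c_4$ in place of $c_3$. Alternatively, one may note directly that replacing $c_3$ by any larger $c_4$ only increases the left-hand sides (the terms $c_4 t^\alpha$ and $c_4 t^\beta$) and only increases the second terms on the right-hand sides (since $-t^\beta/c_4\ge -t^\beta/c_3$ and similarly for $t^\alpha$), so the inequalities are preserved.

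It remains to verify $e^{60\sqrt{K-1}}\ge \sqrt{\beta}=K^{1/(2(n-1))}$, which after taking logarithms amounts to $60\sqrt{K-1}\ge \log K/(2(n-1))$. The worst case is $n=2$, where we need $120\sqrt{K-1}\ge \log K$; using $\log K\le K-1$ this is implied by $120\ge \sqrt{K-1}$, which is easily satisfied in the range of $K$ relevant to the paper (in particular for $K\in(1,2]$). The only mildly subtle point in the whole argument is the algebraic identity $q^2=\beta$, after which the statement is essentially immediate from Lemma \ref{vesnalemma}.
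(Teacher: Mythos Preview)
Your proof is correct and follows the same approach as the paper: both reduce to Lemma~\ref{vesnalemma} via the identity $q^2=(\beta-1)/(1-\alpha)=\beta$, and both handle the ``moreover'' clause by monotonicity in $m$. The only minor difference is in checking the example $c_4=e^{60\sqrt{K-1}}$: the paper uses the chain $e^{60\sqrt{K-1}}\ge e^{\sqrt{\beta-1}}\ge\sqrt{\beta}$ (valid for all $K>1$, since $\beta-1\le K-1$ and $e^{2u}\ge 1+u^2$), whereas your bound via $\log K\le K-1$ only yields $\sqrt{K-1}\le 120$; as you note, this already covers the range $K\in(1,2]$ actually used in the paper.
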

\begin{proof}
For the application of Lemma \ref{vesnalemma} we observe that
$$ \frac{\beta-1}{1-\alpha}= \beta >1.$$
 Now the proof follows from Lemma \ref{vesnalemma} and the
inequalities
$$ e^{60\sqrt{K-1}} \ge e^{\sqrt{\beta-1}}\ge \sqrt{\beta}\,. $$
Note that that the function $h(t)= m \max \{t^{\alpha}, t^{\beta} \}
+ \min \{t^{\alpha}, t^{\beta}\}/m$ is increasing in $m$ when $t$ is
fixed.
\end{proof}

Next we introduce some Bernoulli type inequalities. To prove our main result we need the following lemma, which is new to our knowledge. To some extent it is similar to the Bernoulli type inequalities in \cite[1.58(30)]{avv}. For $a =1= b$ part (4) of the next lemma coincides with the usual Bernoulli inequality
\cite[p. 34(4)]{m}. An appendix at the end on this paper gives some additional Bernoulli type inequalities, not needed for this paper, but which have further been studied in \cite{kmsv}.

\begin{lemma}\label{genbernoulli}
  Let $0 < a \le 1 \le b$ and $\varphi(t) = \max \{ t^a,t^b \}$. Then, with $u=\log^{1-a} 2$,
  \begin{itemize}
    \item[(1)] the function
    \[
      f_1(t) = \frac{\log (1+t)}{\log (1+t^a)}
    \]
    is increasing on $(0,\infty)$ with range $(0,1/a)$.
    \item[(2)] For $t \in (0,1)$
    \[
      u \le f_2(t) < 1,
    \]
    where
    \[
      f_2(t) = \frac{\log (1+t^a)}{\log^a (1+t)}.
    \]
    The function $f_2(t)$ is decreasing on $(0,1)$ and increasing on $(1,\infty)$ with $f_2(1) = u$.
    \item[(3)] The function
    \[
      f_4(t) = \frac{\log (1+t^b)}{\log (1+t)}
    \]
    is increasing on $(0,\infty)$ with range $(0,b)$.
    \item[(4)] For $c > 1$
    \[
      \log(1+c \varphi(t)) \le
      \left\{ \begin{array}{ll} c \log^a (1+t), & 0<t<1,\\ {c}{b} \log (1+t), & t \ge 1. \end{array} \right.
    \]
  \end{itemize}
\end{lemma}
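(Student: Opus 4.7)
My plan is to prove the four parts in order and to reduce parts (2), (3), (4) to part (1) together with the classical Bernoulli inequality. The unifying tool is the auxiliary function $q(s) := (1+s)\log(1+s)/s$, which I would first verify is strictly increasing on $(0, \infty)$: this follows from $q'(s) = (s - \log(1+s))/s^2 > 0$.

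For part (1), differentiating $f_1$ directly and simplifying, the sign of $f_1'(t)$ equals the sign of
\[
  (1+t^a) \log(1+t^a) - a t^{a-1} (1+t) \log(1+t) = t^a\, [q(t^a) - a\, q(t)] .
\]
For $0 < t \le 1$, $t^a \ge t$, so $q(t^a) \ge q(t) \ge a q(t)$ and $f_1' \ge 0$. The case $t > 1$ is the main technical point; here I would combine two elementary facts: (a) the Bernoulli-type inequality $1 + t^a \ge (1+t)^a$ for $a \in (0,1]$, which gives $\log(1+t^a) \ge a \log(1+t)$; and (b) the identity $q(s) = L(\log(1+s))$ with $L(x) = x/(1-e^{-x})$, where $L$ is increasing and satisfies $L(au) \ge a L(u)$ for $0 < a \le 1$, $u > 0$. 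Chaining these, $q(t^a) = L(\log(1+t^a)) \ge L(a \log(1+t)) \ge a L(\log(1+t)) = a q(t)$. The asymptotic values $f_1(0^+) = 0$ and $f_1(\infty) = 1/a$ follow from Taylor and leading-log expansions, yielding the range $(0, 1/a)$.

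Parts (2) and (3) are short reductions. For (3), the substitution $s = t^b$ with $\tilde a = 1/b \in (0,1]$ rewrites $f_4(t) = \log(1+s)/\log(1+s^{\tilde a})$, which is the $f_1$ of part (1) with exponent $\tilde a$; monotonicity in $t$ and the range $(0, 1/\tilde a) = (0, b)$ transfer verbatim. For (2), the same logarithmic differentiation trick shows $(\log f_2)'(t)$ has the sign of $q(t) - q(t^a)$, which is negative on $(0, 1)$ and positive on $(1, \infty)$. Hence $f_2$ attains its minimum at $t = 1$, where $f_2(1) = \log 2 / \log^a 2 = \log^{1-a} 2 = u$, and $f_2(0^+) = 1$ by Taylor, giving $u \le f_2(t) < 1$ on $(0,1)$.

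Finally, for (4) I split on the size of $t$. For $0 < t < 1$, $\varphi(t) = t^a$, so part (2) gives $\log(1+t^a) \le \log^a(1+t)$; combining with the standard Bernoulli inequality $\log(1+cx) \le c \log(1+x)$ for $c \ge 1$, $x \ge 0$ (equivalent to $(1+x)^c \ge 1 + cx$), applied at $x = t^a$, yields $\log(1+c t^a) \le c \log(1+t^a) \le c \log^a(1+t)$. For $t \ge 1$, $\varphi(t) = t^b$, and part (3) (range of $f_4$ bounded above by $b$) gives $\log(1+t^b) \le b \log(1+t)$; the same Bernoulli step then yields $\log(1+c t^b) \le c b \log(1+t)$. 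The only genuine obstacle is the $t > 1$ sub-case of part (1); everything else is bookkeeping, elementary calculus, or a direct reduction.
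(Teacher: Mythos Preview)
Your proposal is correct, and the overall architecture matches the paper's: the same derivative criterion $q(t^a)\ge a\,q(t)$ for part~(1), the same sign analysis $q(t)-q(t^a)$ for part~(2), and the same Bernoulli step $\log(1+cx)\le c\log(1+x)$ feeding into (2)/(3) for part~(4).

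Where you genuinely diverge is the hard sub-case of (1), namely $t>1$. The paper shows that $h(t)=q(t^a)-a\,q(t)$ is \emph{decreasing} on $(1,\infty)$ by reducing $h'\le 0$ to the monotonicity of $s\mapsto \log(1+s)/s$, and then (implicitly) uses $h(t)\to 0$ as $t\to\infty$ to conclude $h\ge 0$. Your route---writing $q(s)=L(\log(1+s))$ with $L(x)=x/(1-e^{-x})$, then chaining $\log(1+t^a)\ge a\log(1+t)$ with $L$ increasing and $L(au)\ge aL(u)$---gives the inequality $q(t^a)\ge a\,q(t)$ directly and uniformly for all $t>0$, without a separate limit check. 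This is a little slicker and avoids the slightly tacit endpoint argument in the paper. Your reduction of (3) to (1) via $s=t^b$, $\tilde a=1/b$ is also cleaner than the paper's approach, which essentially reruns the derivative computation of (1). The trade-off is that the paper's monotonicity argument for $h$ is more elementary (no auxiliary function $L$), while yours packages the inequality in a single, reusable identity.
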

\begin{proof}
  \textit{(1)} We will first show that $f_1(t)$ is an increasing function. By a straightforward computation
  \[
    f_1'(t) = \frac{1}{\log^2(1+t^a)} \left( \frac{\log (1+t^a)}{1+t}-\frac{a \log (1+t)}{t+t^{1-a}} \right)
  \]
  and $f_1'(t) \ge 0$ is equivalent to
  \begin{equation}\label{derivativeoff1}
    a g(t) \le g(t^a)
  \end{equation}
  for $g(t) = (1+1/t)\log(1+t)$. By differentiation we see that $g(t)$ is increasing and therefore (\ref{derivativeoff1}) holds for $t \in (0,1]$.

  Let us assume $t > 1$. We will show that the function $h(t) = g(t^a)-a g(t)$ is decreasing. We obtain
  \[
    h'(t) = \frac{a}{t^{2+a}} \left( t^a \log(1+t)-t \log(1+t^a) \right)
  \]
  and $h'(t) \le 0$ is equivalent to $s(t) \le s(t^a)$ for $s(t) = \frac{\log(1+t)}{t}$. By \cite[p. 273, 3.6.18]{m} $t/(1+t) \le \log (1+t)$ and therefore
  \[
    s'(t) = \frac{\frac{t}{1+t}-\log(1+t)}{t^2} \le 0
  \]
  and implying $s(t) \le s(t^a)$. We conclude that $f_1(t)$ is an increasing function on $(0,\infty)$.

  By the l'Hospital Rule
  \[
    \lim_{t\to 0} f_1(t) = \lim_{t\to 0} \frac{t^{1-a}(1+t^a)}{a(1+t)} = 0
  \]
  and
  \[
    \lim_{t\to \infty} f_1(t) = \lim_{t\to \infty} \frac{t^{1-a}(1+t^a)}{a(1+t)} = \frac{1}{a}
  \]
  and the assertion follows.

  \textit{(2)} We will show that $f_2(t)$ is decreasing on $(0,1)$ and increasing on $(1,\infty)$. We have
  \[
    f_2'(t) = \frac{a}{\log^a (1+t)} \left( \frac{1}{t+t^{1-a}}-\frac{\log(1+t^a)}{(1+t)\log (1+t)} \right)
  \]
  and $f_2'(t) \le 0$ is equivalent to
  \begin{equation}\label{derivativeoff2}
    g(t) \le g(t^a)
  \end{equation}
  for $g(t) = (1+1/t)\log(1+t)$. The function $g(t)$ is increasing on $(0,\infty)$ because $g'(t) = (t-\log(1+t))/t^2 > 0$. Therefore inequality (\ref{derivativeoff2}) is true and $f_2'(t) < 0$ for $t \in (0,1]$. Similarly, $f_2'(t) \ge 0$ for $[1,\infty)$. Now $f_2(t)$ is decreasing on $(0,1)$ and increasing on $(1,\infty)$. Therefore
  \[
    u = f_2(1)  \le f_2(t) < \lim_{t \to 0} f_2(t) = 1.
  \]

  \textit{(3)} We show that $f_4(t)$ is an increasing function on $(0,\infty)$. Since
  \[
    f_4'(t) = \frac{1}{\log^2(1+t)} \left( \frac{b \log(1+t)}{t+t^{1-b}}-\frac{\log(1+t^b)}{1+t} \right)
  \]
  the inequality $f_4'(t) \ge 0$ is equivalent to $b g(t) \ge g(t^b)$ for $g(t) = (1+1/t)\log(1+t)$. It holds true by proof of \textit{(1)}.

  By the l'Hospital Rule
  \[
    \lim_{t\to 0} f_4(t) = \lim_{t\to 0} \frac{b(1+t)t^{b-1}}{(1+t^b)} = 0
  \]
  and
  \[
    \lim_{t\to \infty} f_4(t) = \lim_{t\to \infty} \frac{b(1+t)t^{b-1}}{(1+t^b)} = b
  \]
  and the assertion follows.

  \textit{(4)} The assertion follows since by the Bernoulli inequality \cite[p. 34(4)]{m} and \textit{(2)}
  \[
    \log(1+c t^a) \le c \log(1+t^a) \le c \log^a (1+t)
  \]
  for $t \in (0,1)$, and by the Bernoulli inequality \cite[p. 34(4)]{m} and \textit{(3)}
  \[
    \log(1+c t^b) \le c \log (1+t^b) \le c b \log(1+t)
  \]
  for $t \ge 1.$
\end{proof}

\begin{proposition}
	\label{prop215}
		\textit{(1)}
			Let $u,v\in(0,\infty)$ and $u+v>1>|u-v|$. Then the points of intersection of the circles
			$$
			\left\{
			\begin{array}{l}
			x^2+y^2=v^2,\vspace{0.5em}\\
			(x-e_1)^2+y^2=u^2
			\end{array}
			\right.
			$$
			are $(\frac{1+v^2-u^2}2,\pm w)$, $w=\sqrt{v^2-\left(\frac{1+v^2-u^2}2\right)^2}$.

		\textit{(2)}
			Let $0<s<v$, $0<\varepsilon<v-s$. Then
			$$
			h(s)\equiv \sqrt{v^2-s^2}-\sqrt{v^2-(s+\varepsilon)^2}
			$$
			is increasing on $(0,v-\varepsilon)$ and hence
			$$
			h(s)\leqslant h(v-\varepsilon)=\sqrt{\varepsilon( 2  v- \varepsilon)}
			$$
			for all $s\in(0,v-\varepsilon)$. Moreover,
			$$
			B\equiv|(s,\sqrt{v^2-s^2})-(s+\varepsilon,\sqrt{v^2-(s+\varepsilon)^2})|\leqslant\sqrt{2 \, \varepsilon v}.
			$$
\end{proposition}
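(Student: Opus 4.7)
For part (1), the plan is pure coordinate geometry. I would subtract the two circle equations to eliminate the quadratic terms: expanding $(x-e_1)^2 = x^2 - 2x + 1$ and subtracting from $x^2 + y^2 = v^2$ yields the linear equation $2x - 1 = v^2 - u^2$, hence $x = (1 + v^2 - u^2)/2$ at once. Substituting back into the first circle gives $y^2 = v^2 - x^2 = w^2$, so $y = \pm w$. The hypotheses $u+v > 1 > |u-v|$ are exactly what is needed to guarantee that the circles actually intersect, and I would check briefly that they force the radicand defining $w$ to be nonnegative by factoring $v^2 - \bigl(\tfrac{1+v^2-u^2}{2}\bigr)^2$ as a product of $\bigl((u+v)^2-1\bigr)$ and $\bigl(1-(u-v)^2\bigr)$ up to a positive factor.

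For part (2), the monotonicity of $h$ is the main computation. I would differentiate to obtain
$$
h'(s) = \frac{s+\varepsilon}{\sqrt{v^2-(s+\varepsilon)^2}} - \frac{s}{\sqrt{v^2-s^2}},
$$
and then introduce the auxiliary function $\psi(t) = t/\sqrt{v^2-t^2}$ on $(0,v)$. A short derivative computation gives $\psi'(t) = v^2/(v^2-t^2)^{3/2} > 0$, so $\psi$ is strictly increasing on $(0,v)$; since $s+\varepsilon > s$, this yields $h'(s) = \psi(s+\varepsilon) - \psi(s) > 0$ on $(0,v-\varepsilon)$, proving $h$ is increasing there. Evaluating at the right endpoint,
$$
h(v-\varepsilon) = \sqrt{v^2-(v-\varepsilon)^2} - 0 = \sqrt{2v\varepsilon - \varepsilon^2} = \sqrt{\varepsilon(2v-\varepsilon)},
$$
which gives the stated pointwise bound on $h$.

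For the bound on $B$, I would observe that the horizontal and vertical components of the displacement between the two listed points are $\varepsilon$ and $h(s)$ respectively, so
$$
B^2 = \varepsilon^2 + h(s)^2 \le \varepsilon^2 + \varepsilon(2v-\varepsilon) = 2 \varepsilon v,
$$
yielding $B \le \sqrt{2 \varepsilon v}$. I do not anticipate any serious obstacle: the whole proposition is routine analytic geometry. The only real choice is the isolation of the auxiliary function $\psi$, which reduces the monotonicity of $h$ to a one-line derivative check, and the clean evaluation at $s = v-\varepsilon$ that produces the tidy form $\sqrt{\varepsilon(2v-\varepsilon)}$.
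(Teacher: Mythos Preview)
Your proposal is correct and follows exactly the route the paper takes; the paper's own proof simply declares part (1) ``Obvious'' and the monotonicity of $h$ ``clear,'' and then writes the same one-line chain $B^2=\varepsilon^2+h(s)^2\le\varepsilon^2+\varepsilon(2v-\varepsilon)=2\varepsilon v$ that you give. Your introduction of the auxiliary function $\psi(t)=t/\sqrt{v^2-t^2}$ is a clean way to fill in the monotonicity step the paper leaves to the reader.
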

\begin{proof}
	\textit{(1)} Obvious.
	
	\textit{(2)} The fact that $h$ is increasing is clear.
	Moreover, by increasing property of $h$
	$$
	 B^2=\varepsilon^2+(\sqrt{v^2-u^2}-\sqrt{v^2-(u+\varepsilon)^2})^2\leqslant\varepsilon^2+\varepsilon(2v-\varepsilon)=2\varepsilon v. \qedhere
	$$
	\end{proof}

\begin{figure}[!ht]
  \begin{center}
    \includegraphics[width=10cm]{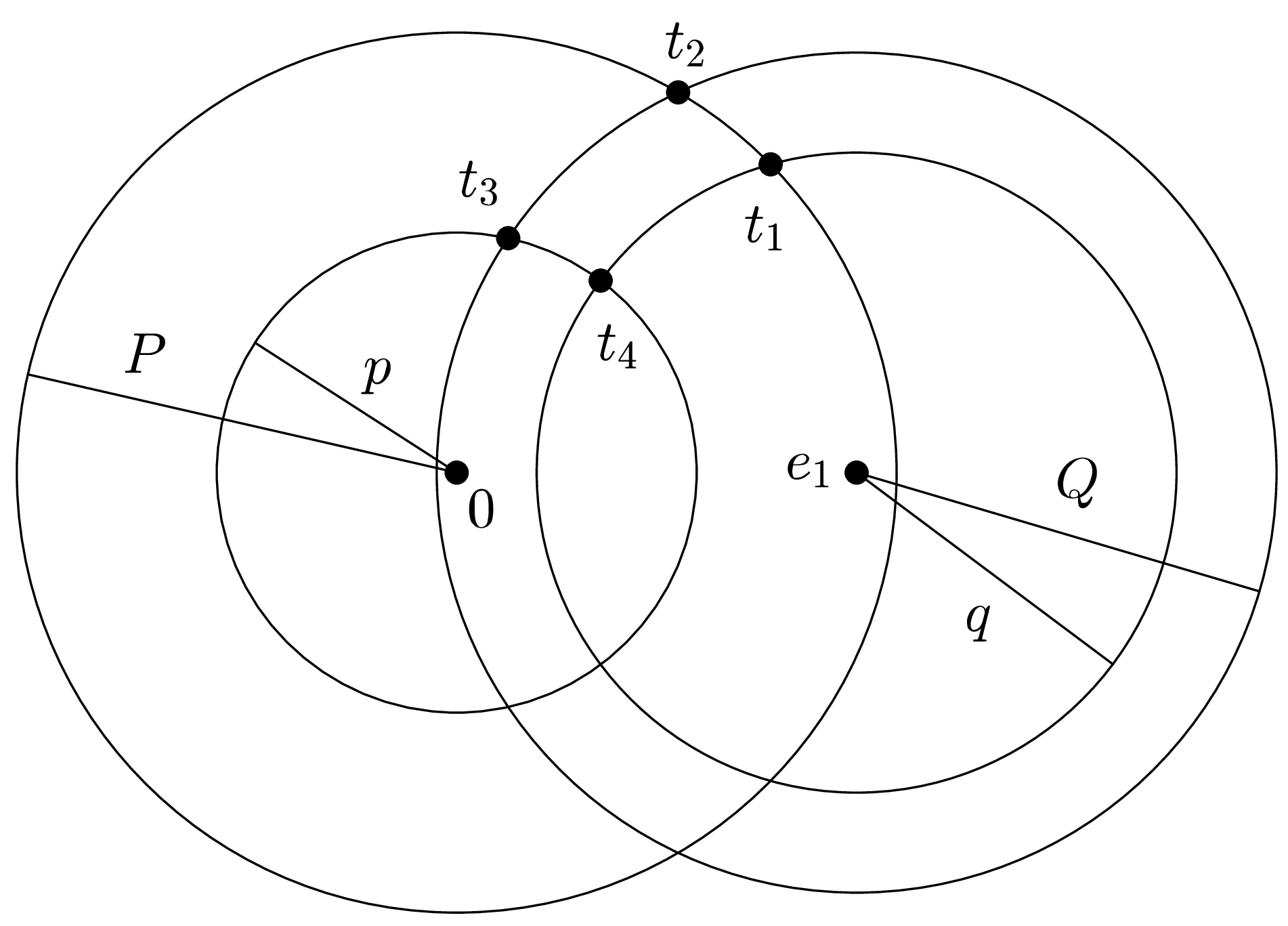}
    \caption{The four circles in Lemma 2.18.\label{fig:lemma218}}
  \end{center}
\end{figure}

\begin{lemma}
	\label{lemma216}
	Let $0<p<P$ and $0<q<Q$ be numbers such that $S(0,p)$ and $S(0,P)$ both intersect $S(e_1,Q)$
	at two points $t_1,t_2$ and also $S(e_1,q)$ at two points $t_3,t_4$. Then the set
	$$
	A\equiv(B^2(0,P)\setminus\bar B^2(0,p))\cap(B^2(e_1,Q)\setminus\bar B^2(e_1,q))\cap\mathbb H^2
	$$
	does not meet the $e_1$-axis. Suppose that the points
	$t_1,t_2,t_3,t_4$ occur in the positive order when we traverse $\partial A\,.$ Then
	\begin{equation}
	\label{bound}
	2\,\diam(A)\le{\frac\pi 2}(|t_1-t_2|+|t_2-t_3|+|t_3-t_4|+|t_4-t_1|)
	\end{equation}
	and, moreover,
	$$
	\begin{array}{l}
	|t_1-t_2|^2\leqslant P(Q^2-q^2),
	\vspace{0.5em}\\
	|t_2-t_3|^2\leqslant Q(P^2-p^2),
	\vspace{0.5em}\\
	|t_3-t_4|^2\leqslant p(Q^2-q^2),
	\vspace{0.5em}\\
	|t_4-t_1|^2\leqslant q(P^2-p^2),
\vspace{0.5em}\\
\diam(A) \le 2 \pi \max\{ P,Q  \} \sqrt{\varepsilon} \,.
	\end{array}
	$$
\end{lemma}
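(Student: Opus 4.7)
The plan is to prove the four conclusions of the lemma in order, exploiting the fact that under the annuli intersection hypothesis the region $A$ is a curvilinear quadrilateral lying strictly above the $e_1$-axis.

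First, the ``does not meet'' claim is essentially by construction: since $A\subset\mathbb H^2$, the open upper half-plane, and the $e_1$-axis coincides with $\partial\mathbb H^2$, the non-meeting is immediate. The assumed intersection pattern further implies that $\partial A$ is a single Jordan curve whose four circular arcs join consecutively at $t_1,t_2,t_3,t_4$, so $\bar A$ is a topological closed disk sitting inside $\mathbb H^2$.

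For the chord--perimeter inequality \eqref{bound}, projecting $\bar A$ orthogonally onto the line through two points realizing $\diam(A)$ shows that $\partial A$ must cover an interval of length $\diam(A)$ at least twice, so $L(\partial A)\ge 2\diam(A)$, where $L$ denotes arc length. Each of the four boundary arcs lies on the portion of its circle inside $\mathbb H^2$ and hence subtends a central angle $\theta\le\pi$; in this range the ratio $\theta/(2\sin(\theta/2))$ is at most $\pi/2$, so arc length is at most $\pi/2$ times the chord length. Summing over the four arcs gives $L(\partial A)\le\tfrac{\pi}{2}\sum|t_i-t_{i+1}|$, which combined with the projection inequality is exactly \eqref{bound}.

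The four individual chord bounds come from Proposition \ref{prop215}. Consider $|t_1-t_2|$ for illustration: both points lie on $S(0,P)$ and belong respectively to $S(e_1,Q)$ and $S(e_1,q)$. By Proposition \ref{prop215}(1) their first coordinates are $(1+P^2-Q^2)/2$ and $(1+P^2-q^2)/2$, differing by $(Q^2-q^2)/2$, while both second coordinates are of the form $\sqrt{P^2-s^2}$. Proposition \ref{prop215}(2), applied with $v=P$ and shift $(Q^2-q^2)/2$, then yields $|t_1-t_2|^2\le P(Q^2-q^2)$; the other three estimates follow identically after permuting the roles of the circles and their centers.

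Finally, for $\diam(A)\le 2\pi\max\{P,Q\}\sqrt{\varepsilon}$ I would combine the four chord bounds with \eqref{bound}. Writing $M=\max\{P,Q\}$, the four chord-square bounds sum to $(P+p)(Q+q)\bigl[(P-p)+(Q-q)\bigr]\le 4M^2\cdot 2\varepsilon$, provided the ambient context supplies $(P-p)+(Q-q)\le 2\varepsilon$ (which holds because $\varepsilon$ controls the annular thicknesses produced by the quasiconformal distortion estimates). Cauchy--Schwarz together with \eqref{bound} then delivers $\diam(A)\le 2\pi M\sqrt{\varepsilon}$ with room to spare. The main subtlety is the very first step, where the annuli intersection criterion is precisely what is needed to guarantee that $\partial A$ really is a simple closed Jordan curve and hence that the perimeter-based diameter estimate is legitimate.
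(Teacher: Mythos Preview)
Your approach is essentially the one the paper takes: bound $2\diam(A)$ by the perimeter, bound each circular arc by $\tfrac{\pi}{2}$ times its chord, then use Proposition~\ref{prop215} to estimate the four chords. The paper simply bounds each chord individually by $2m\sqrt{\varepsilon}$ (via $P(Q+q)\cdot 2\varepsilon\le 4m^2\varepsilon$) rather than summing the squares and applying Cauchy--Schwarz, but the arithmetic is equivalent.

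One numerical slip to flag: in the intended application (Lemma~\ref{finalbd}) one has $P-p=2\varepsilon$ and $Q-q=2\varepsilon$, so $(P-p)+(Q-q)=4\varepsilon$, not $2\varepsilon$. With this correction your sum of chord-squares becomes $(P+p)(Q+q)\cdot 4\varepsilon\le 16M^2\varepsilon$, Cauchy--Schwarz gives $\sum|t_i-t_{i+1}|\le 8M\sqrt{\varepsilon}$, and \eqref{bound} yields $\diam(A)\le 2\pi M\sqrt{\varepsilon}$ on the nose---so there is in fact no ``room to spare,'' but the claimed inequality holds.
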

\begin{proof}
	It is clear that $2\,\diam(A)\leqslant L$, where $L$ is the sum of the lengths of the four
	circular arcs with endpoints $t_i,t_j$. These four arcs form the boundary $\partial A$.
	For instance, the length of the first arc is less than $\frac\pi 2|t_1-t_2|$ and similarly
	for other arcs and hence the desired bound (\ref{bound}) follows.
	
	By Proposition \ref{prop215} (1)
	$$
	\begin{array}{ll}
	t_1=(a,\sqrt{P^2-a^2}), & \displaystyle a=\frac{1+P^2-q^2}2,
	\vspace{0.5em}\\
	t_2=(b,\sqrt{P^2-b^2}), & \displaystyle b=\frac{1+P^2-Q^2}2,
	\vspace{0.5em}\\
	t_3=(c,\sqrt{p^2-c^2}), & \displaystyle c=\frac{1+p^2-Q^2}2,
	\vspace{0.5em}\\
	t_4=(d,\sqrt{p^2-d^2}), & \displaystyle d=\frac{1+p^2-q^2}2.
	\end{array}
	$$
	
	Next, we obtain by Proposition \ref{prop215} (2)
	$$
	\begin{array}{ll}
	|t_1-t_2|\leqslant\sqrt{|a-b|P}=\sqrt{P(Q^2-q^2)},
	\vspace{0.5em}\\
	|t_2-t_3|\leqslant\sqrt{|b-c|Q}=\sqrt{Q(P^2-p^2)},
	\vspace{0.5em}\\
	|t_3-t_4|\leqslant\sqrt{|c-d|p}=\sqrt{p(Q^2-q^2)},
	\vspace{0.5em}\\
	|t_4-t_1|\leqslant\sqrt{|a-d|q}=\sqrt{q(P^2-p^2)}.
	\end{array}
	$$
The final inequality follows, because 
$$ |t_1-t_2| \le \sqrt{   P(Q+q) 2 \varepsilon} \le 2 m \sqrt{    \varepsilon}\,, \quad m=  \max\{P,Q\},$$
and similar upper bounds hold for each of the terms on the right hand side of \eqref{bound}.
\end{proof}
	
	For $0<s<r$, $x\in\R^n\,,$ we denote $R(x,r,s)=B^n(x,r)\setminus\bar B^n(x,s)$.

\begin{lemma} \label{finalbd}
Let $x\in\mathbb H^2$ with $|x|+|x-e_1|>1$,
$$\varepsilon\in(0,\min\{ |x|+|x-e_1|-1,  1-||x|-|x-e_1||\}/3)$$
and let
$$
A(\varepsilon)=R(0,|x|+\varepsilon,|x|-\varepsilon)\cap R(e_1,|x-e_1| +\varepsilon,|x-e_1|-\varepsilon)\cap{\mathbb H}^2\,.
$$
Then
$$\diam(A(\varepsilon))\leqslant 2\pi \max\{|x|,|x-e_1|\}\sqrt{\varepsilon}\,.$$
\end{lemma}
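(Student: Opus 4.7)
The plan is to match $A(\varepsilon)$ to the set $A$ considered in Lemma \ref{lemma216} via the substitution
\[
P=|x|+\varepsilon,\quad p=|x|-\varepsilon,\quad Q=|x-e_1|+\varepsilon,\quad q=|x-e_1|-\varepsilon,
\]
so that
\[
A(\varepsilon)=(B^2(0,P)\setminus \bar B^2(0,p))\cap(B^2(e_1,Q)\setminus \bar B^2(e_1,q))\cap\mathbb{H}^2,
\]
and then to invoke that lemma directly. With this identification the entire content of the estimate is already packaged inside Lemma \ref{lemma216}; the only thing left to do is to check that its hypotheses are met.

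The crux is verifying the annuli intersection criterion, namely that each of the four pairs $(S(0,P),S(e_1,Q))$, $(S(0,P),S(e_1,q))$, $(S(0,p),S(e_1,Q))$, $(S(0,p),S(e_1,q))$ meets in exactly two points. Two circles $S(0,r)$ and $S(e_1,s)$ intersect in two points iff $|r-s|<1<r+s$, so this reduces to elementary bookkeeping over four inequalities. The hypothesis $3\varepsilon<|x|+|x-e_1|-1$ makes the smallest of the four sums, namely $p+q=|x|+|x-e_1|-2\varepsilon$, strictly exceed $1+\varepsilon$; the hypothesis $3\varepsilon<1-||x|-|x-e_1||$ makes the largest of the four absolute differences, namely $||x|-|x-e_1||+2\varepsilon$, strictly less than $1-\varepsilon$. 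Hence all four pairs of circles meet transversally in two points and the hypothesis of Lemma \ref{lemma216} is satisfied.

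Once the intersection criterion is in hand, the final inequality of Lemma \ref{lemma216} gives directly
\[
\diam(A(\varepsilon))\le 2\pi\max\{P,Q\}\sqrt{\varepsilon}=2\pi(\max\{|x|,|x-e_1|\}+\varepsilon)\sqrt{\varepsilon},
\]
from which the stated bound follows by absorbing the lower-order $\varepsilon^{3/2}$ term into the leading one under the smallness of $\varepsilon$ imposed by the hypothesis (the correction is controlled by the ratio $\varepsilon/\max\{|x|,|x-e_1|\}$, which is small by the assumption on $\varepsilon$). The main technical step is thus the verification of the annuli intersection criterion; the desired estimate itself is an immediate corollary of Lemma \ref{lemma216}.
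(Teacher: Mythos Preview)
Your approach is exactly the paper's: set $P=|x|+\varepsilon$, $p=|x|-\varepsilon$, $Q=|x-e_1|+\varepsilon$, $q=|x-e_1|-\varepsilon$, check the annuli intersection criterion, and invoke Lemma~\ref{lemma216}. Your verification of the criterion is in fact more careful than the paper's one-line dismissal.

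There is, however, a small but genuine gap in your final step (and the paper glosses over the same point). Applying the packaged final inequality of Lemma~\ref{lemma216} as a black box yields
\[
\diam(A(\varepsilon))\le 2\pi\max\{P,Q\}\sqrt{\varepsilon}=2\pi\bigl(\max\{|x|,|x-e_1|\}+\varepsilon\bigr)\sqrt{\varepsilon},
\]
and the extra $2\pi\varepsilon^{3/2}$ is \emph{positive}: it cannot be ``absorbed'' into $2\pi\max\{|x|,|x-e_1|\}\sqrt{\varepsilon}$ no matter how small $\varepsilon$ is. What you wrote would only give the stated bound up to a multiplicative factor (e.g.\ $5/3$, using $\varepsilon<1/3\le (2/3)\max\{|x|,|x-e_1|\}$), not the exact constant $2\pi$.

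To recover the constant as stated, go one level deeper and use the four individual estimates from Lemma~\ref{lemma216} together with the symmetric structure $P^2-p^2=4|x|\varepsilon$ and $Q^2-q^2=4|x-e_1|\varepsilon$. Pairing opposite sides and using $\sqrt{a+\varepsilon}+\sqrt{a-\varepsilon}\le 2\sqrt{a}$ gives
\[
|t_1-t_2|+|t_3-t_4|\le 2\sqrt{|x-e_1|\varepsilon}\bigl(\sqrt{|x|+\varepsilon}+\sqrt{|x|-\varepsilon}\bigr)\le 4\sqrt{|x||x-e_1|\varepsilon}\le 4\max\{|x|,|x-e_1|\}\sqrt{\varepsilon},
\]
and likewise for $|t_2-t_3|+|t_4-t_1|$; feeding the total $8\max\{|x|,|x-e_1|\}\sqrt{\varepsilon}$ into the perimeter bound \eqref{bound} yields exactly $2\pi\max\{|x|,|x-e_1|\}\sqrt{\varepsilon}$.
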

\begin{proof}
The bound for $\varepsilon$ implies that the condition $1>|u-v|$ in Lemma \ref{lemma216} is satisfied for
$u=|x| \pm \varepsilon $ and $v=|x-e_1| \pm \varepsilon \,.$
	The proof follows from Lemma \ref{lemma216}.
\end{proof}
\section{Diameter estimate}

In this section we will consider the intersection of the two annuli
 \begin{equation} \label{my31} 
 (B^n(0,b) \setminus  \overline{B}^n(0,a) ) \cap ( B^n(e_1,d) \setminus  \overline{B}^n(e_1,c))\,,
 \end{equation}
when $b>a>0\,,  d>c> 0\,.$ In order to simplify the situation,
we require that this intersection does not contain points of the $e_1$-axis. For this purpose
we introduce a sufficient condition which we call the annuli intersection criterion.
We apply this criterion to study
 a  $K$-quasiconformal mapping $f \colon \overline{\R}^n \to \overline{\R}^n$ with $f(y) = y$ for $y \in \{ 0,e_1,\infty \}$ and our goal is to find an upper bound for quantities such as $|f(x)-x|\,,$ 
  when  $K > 1$ is small enough, the bound for $K$ depending on $n,$ $|x|,|x-e_1|$ in an explicit way via the annuli intersection criterion.

\begin{nonsec} \label{myaic} The annuli intersection criterion \end{nonsec}
We consider the region \eqref{my31} and will give a criterion
for the radii which ensures that this region \eqref{my31} does not contain points of the
$e_1$-axis. Without loss of generality we may assume here that $n=2\,.$ The crucial requirement
is that given a pair of two boundary circles of the four boundary circles of the annuli in
\eqref{my31}, these two circles have a point of intersection. Altogether there are  four points
of intersection, denoted $t_1,t_2,t_3, t_4\,,$ which form the corner points of the region \eqref{my31}\,:
\[
(1) \quad
\{t_1\} = S(0,b) \cap S(e_1, c) \Rightarrow 1-c< b < 1+c
\]
\[
(2) \quad
\{t_2\} = S(0,b) \cap S(e_1, d) \Rightarrow 1-d< b < 1+d
\]
\[
(3) \quad
\{t_3\} = S(0,a) \cap S(e_1, d) \Rightarrow 1-d< a < 1+d
\]
\[
(4) \quad
\{t_4\} = S(0,a) \cap S(e_1, c) \Rightarrow 1-c< a < 1+c
\]
We see that $  (1)  \Rightarrow (2) $ and $  (4)  \Rightarrow (3) $ because $0<c<d\,.$

{\it Conclusion:} For the desired intersection property, it is enough to require
\begin{equation} \label{myaicrit}
1-c < a < 1+c \quad \& \quad 1-c < b < 1+c \,.
\end{equation}
These inequalities constitute the annuli intersection criterion.

A geometric consequence of the annuli intersection criterion is that all the triangles $\Delta(0,1,t_j), j=1,2,3,4\,, $
are non-degenerate, i.e. that the strict triangle inequalities
\[
|t_j|+ |t_j-e_1|>1\,, \quad j=1,2,3,4\,,
\]
\[
1> ||t_j|- |t_j-e_1||\,, \quad j=1,2,3,4
\]
hold.

\begin{nonsec} Application of annuli intersection criterion \label{myaicappl} \end{nonsec}
Fix a point $x \in {\mathbb{R}^n} $ with $ |x|+ |x-e_1|>1 \,.$
We will next find $\delta$ such that with
\[  a=|x|-\delta,  \quad b=|x|+\delta,\quad c=|x-e_1|-\delta, \quad d=|x-e_1|+\delta, \]
the annuli intersection criterion \eqref{myaicrit} is valid. Rewriting
 \eqref{myaicrit} with these $a,b,c,d$ shows that it is enough to choose
 \begin{equation} \label{myaicappl2}
 \delta < \min \{ 1-  ||x|-|x-e_1||, |x|+|x-e_1|-1\}/3 \equiv \E(x)\,.
 \end{equation}
Note that $\E(x) \in (0,1/3)\,.$

\begin{figure}[!ht]
  \begin{center}
    \includegraphics[width=8cm]{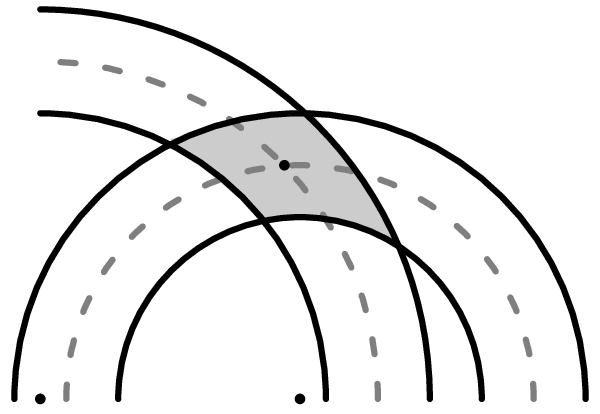}
    \caption{The shaded region is the cross-section of the set $A(\delta)$ in Theorem \ref{epsilonestimate} (1) with a
    half-plane that contains both $0$ and $e_1$.\label{fig:crosssectionA}}
  \end{center}
\end{figure}

\begin{theorem}\label{epsilonestimate}
  Fix $x \in \mathbb{R}^n \setminus \{ 0,e_1 \}$ with $|x|+|x-e_1|>1$ and let $\E(x) $ be as in \eqref{myaicappl2}.

(1) If $\delta \in (0, \E(x))\,,$ then the intersection of annuli
\[
A(\delta) \equiv (B^n(0,|x|+\delta) \setminus  \overline{B}^n(0,|x|-\delta) ) \cap ( B^n(e_1,|x-e_1|+ \delta) \setminus  \overline{B}^n(e_1,|x-e_1|-\delta))\,,
\]
does not contain points of the $e_1$-axis. Moreover, if $\mathbb{H} = \{ x \in \mathbb{R}^n: x_2>0, x_3=\dots = x_n= 0\}\,,$ then
\[
\diam(\mathbb{H} \cap A(\delta)) \le 2 \pi \max \{  |x|,|x-e_1|\} \sqrt{\delta }\,.
\]

(2) If $\delta \in (0, \E(x))\,,$
\[
    1 < K \le  1+ \left( \frac{\log (\e/2+1)}{62} \right)^2 <1 + 10^{-5} \,,
  \]
and  $f \in QC_K(\mathbb{R}^n)\,$ with $f(z)=z$ for $z \in  \{0, e_1\}\,,$  
then  $f(x) \in A(\delta)\,.$ Moreover, there exists a M\"obius transformation, a rotation $h: \mathbb{R}^n \to  \mathbb{R}^n$
around the $e_1$-axis, such that
$$   |h(f(x))-x| \le 2 \pi \max \{  |x|,|x-e_1|\} \sqrt{\delta } \,. $$
\end{theorem}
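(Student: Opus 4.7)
The plan for part (1) is to verify the annuli intersection criterion from \ref{myaic} with the choice of radii $a=|x|-\delta$, $b=|x|+\delta$, $c=|x-e_1|-\delta$, $d=|x-e_1|+\delta$. The hypothesis $\delta<\Delta(x)$ is exactly the bound \eqref{myaicappl2} that guarantees \eqref{myaicrit}, so the four corner points of $A(\delta)$ lie strictly off the $e_1$-axis and $A(\delta)$ itself avoids the $e_1$-axis entirely. The diameter estimate for $\mathbb{H}\cap A(\delta)$ then follows by direct application of Lemma~\ref{finalbd} to the 2D cross-section.

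For part (2), I would first establish the key inclusion $f(x)\in A(\delta)$, which amounts to showing $\bigl||f(x)|-|x|\bigr|\le\delta$ and $\bigl||f(x)-e_1|-|x-e_1|\bigr|\le\delta$. The hypothesis $62\sqrt{K-1}\le\log(\delta/2+1)$ immediately yields $c_3=\exp(60\sqrt{K-1})<\delta/2+1$. Proposition~\ref{qcestimate} then gives the two-sided multiplicative bounds $|x|^\beta/c_3\le|f(x)|\le c_3|x|^\alpha$ for $|x|\le 1$ (with $\alpha,\beta$ interchanged for $|x|>1$); applying the quasisymmetry inequality \eqref{qsdefinition} with the triples $(x,0,e_1)$ and $(0,x,e_1)$ yields analogous bounds for $|f(x)-e_1|$ in terms of $|x-e_1|$. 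By Lemma~\ref{c3estimate}, the upper-side multiplicative error $c_3 t^\alpha-t$ (respectively $c_3 t^\beta-t$) dominates the lower-side one $t-t^\beta/c_3$ (respectively $t-t^\alpha/c_3$), so it suffices to control the upper error by $\delta$. For $t\in(0,1]$ the function $t\mapsto c_3 t^\alpha-t$ is increasing (its derivative $c_3\alpha t^{\alpha-1}-1$ being positive on this range), so its maximum is $c_3-1<\delta/2<\delta$; the case $t>1$ is handled analogously, using that $\beta-1\approx(K-1)/(n-1)$ is of order $\delta^2$ while the prefactor $c_3-1$ is only of order $\delta$.

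For the final inequality in (2), observe that $A(\delta)$ is rotationally symmetric about the $e_1$-axis. Let $\mathbb{H}_x$ be a half-plane bounded by the $e_1$-axis and containing $x$, and let $h$ be the M\"obius rotation around the $e_1$-axis that carries $f(x)$ into $\mathbb{H}_x$. Then both $x$ and $h(f(x))$ lie in $A(\delta)\cap\mathbb{H}_x$, and part (1), together with the rotational invariance of $\diam(A(\delta)\cap\mathbb{H})$, gives
\[
|h(f(x))-x|\le \diam(A(\delta)\cap\mathbb{H}_x)\le 2\pi\max\{|x|,|x-e_1|\}\sqrt{\delta}.
\]
The hardest step is the conversion of the purely multiplicative estimates of Proposition~\ref{qcestimate} into the additive bound $\bigl||f(x)|-|x|\bigr|\le\delta$; this is where the precise calibration of the constant $62$ in the hypothesis on $K$ becomes crucial, together with Lemma~\ref{c3estimate} to bypass the need for a separate treatment of the lower-side error.
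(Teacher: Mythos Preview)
Your proposal follows the same route as the paper: part (1) via the annuli intersection criterion \eqref{myaicrit} and Lemma~\ref{finalbd}; part (2) by converting the multiplicative bounds of Proposition~\ref{qcestimate} into the additive bound $\bigl||f(x)|-|x|\bigr|\le\delta$ using Lemma~\ref{c3estimate}, doing the same for $|f(x)-e_1|$, and finally rotating $f(x)$ into the half-plane through $x$.

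The one place where your argument has a genuine gap is the case $t=|x|>1$ (and symmetrically $|x-e_1|>1$). Your observation that $\beta-1$ is of order $\delta^2$ while $c_3-1$ is of order $\delta$ is correct, but it does not by itself control $c_3 t^\beta - t = t\bigl(c_3 t^{\beta-1}-1\bigr)$: without an upper bound on $t$ this quantity is unbounded, so ``handled analogously'' does not go through. The paper treats this case explicitly under the restriction $1<|x|<2$, estimating
\[
c_3|x|^\beta - |x| \;\le\; |x|\bigl(e^{60\sqrt{K-1}}|x|^{K-1}-1\bigr) \;\le\; 2\bigl(e^{60\sqrt{K-1}+(K-1)\log 2}-1\bigr) \;\le\; 2\bigl(e^{62\sqrt{K-1}}-1\bigr),
\]
using $\beta\le K$ and $(K-1)\log 2<2\sqrt{K-1}$. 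This computation is exactly where the constant $62$ (rather than $60$) and the factor $\delta/2$ (rather than $\delta$) in the hypothesis on $K$ come from. Your order-of-magnitude heuristic becomes a proof once such a bound on $|x|$ is in place, but as written it is not.
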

\begin{proof} (1) Fix $z \in A(\delta)\,.$ The choice of $\Delta(x)$ implies $|z|+|z-e_1|>1\,$ and that the annuli intersection criterion
\eqref{myaicrit} holds for $a=|x| - \varepsilon , b=|x| + \varepsilon, c= |x-e_1| - \varepsilon, d= |x-e_1| + \varepsilon\,.$ 
The upper bound for $\diam(\mathbb{H} \cap A(\delta))$ follows directly from Lemma \ref{finalbd}. 

(2) We need to prove that both
 \[
    |x|-\e \le |f(x)| \le |x|+\e \quad \textrm{and} \quad |x-e_1|-\e \le |f(x)-e_1| \le |x-e_1|+\e
  \]
hold. It is enough to prove the first inequality because the proof of the second one is similar.
Let us denote $l(x) = c_3^{-1} \min \{ |x|^\alpha,|x|^\beta \}$ and $u(x) = c_3 \max \{ |x|^\alpha,|x|^\beta \}$
  where $c_3= e^{60\sqrt{K-1}} $. Then by Proposition \ref{qcestimate}
\[
l(x) \le |f(x)| \le u(x)
\]
for all $x \in {\mathbb{R}^n}\,.$


We will first consider the case $0 < |x| < 1$. By Lemma \ref{c3estimate}
  \begin{eqnarray*}
    \max \{ u(x)-|x|,|x|-l(x) \} & = & \max \left\{ c_3 |x|^\alpha-|x|,|x|-\frac{1}{c_3}|x|^\beta \right\}\\
     & = & c_3 |x|^\alpha-|x|\\
    & \le & \exp(60\sqrt{K-1}) |x|^\alpha - |x|\\
    & \le & \exp(60\sqrt{K-1}) |x|^{1/K} - |x|\\
    & \le & \exp(60\sqrt{K-1})-1.
  \end{eqnarray*}
  Now $\exp(60\sqrt{K-1})-1 \le \e$ is equivalent to
  \begin{equation}\label{Kupperbound1}
    K \le \left( \frac{\log (\e+1)}{60} \right)^2+1.
  \end{equation}

  If $|x| = 1$, then
  \[
    \max \{ u(x)-|x|,|x|-l(x) \} = c_3-1
  \]
  and therefore we want $\exp(60\sqrt{K-1})-1 \le \e$, which is equivalent to
  \begin{equation}\label{Kupperbound2}
    K \le \left( \frac{\log (\e+1)}{60} \right)^2+1.
  \end{equation}

  Let us then consider the case $1 < |x| < 2$. By Lemma \ref{c3estimate}
  \begin{eqnarray*}
    \max \{ u(x)-|x|,|x|-l(x) \} & = & \max \left\{ c_3 |x|^\beta-|x|,|x|-\frac{1}{c_3}|x|^\alpha \right\}\\
    & = & c_3 |x|^\beta-|x|\\
    & \le & \exp(60\sqrt{K-1}) |x|^\beta - |x|\\
    & \le & \exp(60\sqrt{K-1}) |x|^K - |x|\\
    & \le & |x|(\exp(60\sqrt{K-1})|x|^{K-1}-1)\\
    & \le & 2(\exp(60\sqrt{K-1}+(K-1)\log |x|)-1)\\
    & \le & 2(\exp(62 \sqrt{K-1})-1).
  \end{eqnarray*}
  Now $2(\exp(62 \sqrt{K-1})-1) \le \e$ is equivalent to
  \begin{equation}\label{Kupperbound3}
    K \le \left( \frac{\log (\e/2+1)}{62} \right)^2+1.
  \end{equation}
  By combining (\ref{Kupperbound1}), (\ref{Kupperbound2}) and (\ref{Kupperbound3}) we have
  \[
  |x|-\e \le l(x) \le |f(x)|\le u(x) \le |x|+\e
  \]
  for
\begin{eqnarray*}
    K & \le & \min \left\{ \left( \frac{\log (\e+1)}{60} \right)^2+1,2,\left( \frac{\log (\e/2+1)}{62} \right)^2+1 \right\}\\
    & = & \min \left\{ \left( \frac{\log (\e/2+1)}{62} \right)^2+1 ,2 \right\}= 1+ \left( \frac{\log (\e/2+1)}{62} \right)^2 <1 + 10^{-5} \,.
\end{eqnarray*}
A similar argument also implies $|x-e_1|-\e \le |f(x)-e_1| \le |x-e_1|+\e$ and the assertion follows.
Finally, the rotation $h$ is taken to be the rotation around the $e_1$-axis mapping $f(x)$ to the plane
determined by the  $e_1$-axis and the point $x\,.$
\end{proof}





\section{The main results}

In this section we will apply the diameter estimates to obtain results for distortion results in terms of the
$j$-metric and
the quasihyperbolic metric.
For the convenience of the reader we recall some basic properties of this metric.
For a given pair of points $x,y\in G,$ the infimum in \eqref{qhmetric} is always
attained \cite{go}, i.e., there always exists a quasihyperbolic
geodesic $J_G[x,y]$ which minimizes the quasihyperbolic length,
$k_G(x,y)=\ell_k(J_G[x,y])\,$ and furthermore
the distance is additive on the geodesic: $k_G(x,y)=$
$k_G(x,z)+k_G(z,y) $ for all $z\in J_G[x,y]\,.$

\begin{lemma} \label{ksvbd} (\cite[Lemma 3.7 (2)]{vu1}, \cite[Lemma 2.1]{gp})\,. Fix $\lambda \in (0,1) \,.$
For $x,y \in G= {\mathbb R}^n \setminus \{ 0\}$ with
$|x-y|\le \lambda |x|$ we have
\[j_G(x,y) \le k_G(x,y) \le c_1(\lambda) j_G(x,y) \]
with $c_1(\lambda) =1/(1-\lambda).$
\end{lemma}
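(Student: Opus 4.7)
The plan is to prove both inequalities using standard curve-length arguments. The lower bound $j_G(x,y)\le k_G(x,y)$ does not require the hypothesis $|x-y|\le \lambda |x|$ and I would handle it first. Given any rectifiable curve $\gamma$ from $x$ to $y$ parametrized by arc length $s$, the triangle inequality gives $d(z(s))\le d(x)+s$, so
$$
\ell_k(\gamma)\ge \int_0^{\ell(\gamma)}\frac{ds}{d(x)+s}=\log\!\left(1+\frac{\ell(\gamma)}{d(x)}\right)\ge \log\!\left(1+\frac{|x-y|}{d(x)}\right).
$$
A symmetric argument with $y$ as basepoint gives $\ell_k(\gamma)\ge \log(1+|x-y|/d(y))$; taking the maximum of these two lower bounds (which equals $\log(1+|x-y|/\min\{d(x),d(y)\})=j_G(x,y)$) and then the infimum over $\gamma$ yields $k_G(x,y)\ge j_G(x,y)$.

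For the upper bound I would use the straight segment $[x,y]$ as a test curve. The hypothesis $|x-y|\le \lambda |x|$ with $\lambda<1$ guarantees $0\notin [x,y]$, so the segment lies in $G$. For $z(t)=x+t(y-x)$, $t\in[0,1]$, the reverse triangle inequality gives $|z(t)|\ge |x|-t|x-y|$, hence
$$
k_G(x,y)\le \int_0^{|x-y|}\frac{ds}{|x|-s}=\log\frac{|x|}{|x|-|x-y|}=\log\frac{1}{1-\tau},
$$
where $\tau=|x-y|/|x|\in[0,\lambda]$. Since $\min\{|x|,|y|\}\le |x|$, $j_G(x,y)\ge \log(1+\tau)$, so it will suffice to establish the one-variable inequality
$$
\log\frac{1}{1-\tau}\le \frac{1}{1-\lambda}\log(1+\tau)\quad\text{for all }\tau\in[0,\lambda],
$$
equivalently $(1+\tau)(1-\tau)^{1-\lambda}\ge 1$.

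The remaining step, and the only real obstacle, is this analytic inequality. I would set $\phi(\tau)=(1+\tau)(1-\tau)^{1-\lambda}$ and compute $\phi'(\tau)=(1-\tau)^{-\lambda}\bigl(\lambda-(2-\lambda)\tau\bigr)$, observing that $\phi$ is unimodal on $[0,\lambda]$ with $\phi(0)=1$ and maximum at $\tau=\lambda/(2-\lambda)\in(0,\lambda)$. Consequently $\phi\ge 1$ on $[0,\lambda]$ as soon as $\phi(\lambda)\ge 1$, i.e., $h(\lambda):=\log(1+\lambda)+(1-\lambda)\log(1-\lambda)\ge 0$. Since $h(0)=0$, it is enough to show
$$
h'(\lambda)=-\frac{\lambda}{1+\lambda}-\log(1-\lambda)\ge 0,
$$
which follows from the elementary chain $-\log(1-\lambda)\ge \lambda\ge \lambda/(1+\lambda)$ for $\lambda\in[0,1)$ (the first inequality is immediate from the series $-\log(1-\lambda)=\sum_{k\ge 1}\lambda^k/k$). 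This finishes the proof with $c_1(\lambda)=1/(1-\lambda)$, and the constant is sharp in the limit $\lambda\to 0$, consistent with the fact that $k_G$ and $j_G$ are asymptotically equal on small scales.
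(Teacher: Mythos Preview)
Your argument is correct. The paper itself does not prove this lemma; it simply records it with citations to \cite[Lemma~3.7(2)]{vu1} and \cite[Lemma~2.1]{gp}, so there is no in-paper proof to compare against. Your self-contained proof follows the standard route for both inequalities: the lower bound via the universal inequality $d(z(s))\le d(x)+s$ along any curve, and the upper bound via the straight segment together with the pointwise estimate $|z|\ge |x|-s$. The one nontrivial ingredient is the one-variable inequality $\log\frac{1}{1-\tau}\le \frac{1}{1-\lambda}\log(1+\tau)$ for $\tau\in[0,\lambda]$, and your unimodality/endpoint argument for $\phi(\tau)=(1+\tau)(1-\tau)^{1-\lambda}$ together with the verification that $h'(\lambda)=-\lambda/(1+\lambda)-\log(1-\lambda)\ge 0$ handles it cleanly. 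This is essentially the classical argument behind the cited references, so your write-up can stand in for the omitted proof without change.
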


Only in rare special cases there is a formula for the quasihyperbolic distance
between two points. One such case is when $x,y \in G= {\mathbb R}^n \setminus \{ 0\}\,.$
Martin and Osgood \cite{mo} proved that
\begin{equation}
\label{moformula}
k_G(x,y) = \sqrt{ \log^2 \frac{|x|}{|y|} + \left(2 \, \mathrm{arcsin} \left( \frac{1}{2} \left( \left| \frac{x}{|x|} -\frac{y}{|y|} \right| \right) \right) \right)^2} \,,
\end{equation}
for all $x,y\in G\,.$

\begin{proof}[Proof of Theorem \ref{main1}]
 The proof follows from Theorem \ref{epsilonestimate}.
 \end{proof}

We now prove Theorem \ref{main2}, which can be easily  generalized for arbitrary subdomains of $\mathbb{R}^n \,.$

\begin{proof}[Proof of Theorem \ref{main2}]
  Both sides of the claim are invariant under a homothety mapping $z \mapsto t z, t>0,$
   and so is the normalization. Moreover, composition with a homothety does not change
   $K$-quasiconformality. Therefore
   we may assume that $x = e_1, f(e_1) =e_1\,$ and by symmetry we, furthermore,
   assume that $|y| \ge 1$. Now
  \[
    \frac{|f(y)-f(e_1)|}{|f(e_1)|} = |f(y)-f(e_1)| \le \eta \left( |e_1-y| \right)
  \]
  and
  \[
    \frac{|f(y)-f(e_1)|}{|f(y)|} = \frac{|f(y)-f(e_1)|}{|f(y)-f(0)|} \le \eta \left( \frac{|e_1-y|}{|y-0|} \right) = \eta \left( \frac{|e_1-y|}{|y|} \right).
  \]
  Therefore by Proposition \ref{qcestimate} and Lemma \ref{genbernoulli} (4)
  \begin{eqnarray*}
    j(f(e_1),f(y)) & = & \log \left( 1+\frac{|f(e_1)-f(y)|}{\min \{ |f(e_1)|,|f(y)| \}} \right)\\
    & \le & \log
    \left( 1+\max \left\{ \eta(|y-e_1|),\eta \left( \frac{|x-y|}{|y|} \right) \right\} \right)\\
    & = & \log (1+\eta(|y-e_1|))\\
    & \le & \log (1+c_3 \max \{ |y-e_1|^\alpha,|y-e_1|^{1/\alpha} \})\\
    & \le & \left\{ \begin{array}{ll} c_3 \log^\alpha (1+|y-e_1|), & 0<|y-e_1|<1,\\ \frac{c_3}{\alpha} \log (1+|y-e_1|), & |y-e_1| \ge 1. \end{array} \right.
  \end{eqnarray*}
  By choosing $c(K) = c_3/\alpha$, where $c_3=\exp (60 \sqrt{K-1})$ is as in Lemma \ref{c3estimate}, we have $c(K) \to 1$ as $K \to 1$ and the assertion follows.
\end{proof}

\begin{corollary}
  Let $G \subset \Rn$ be a domain, $f \in QC_K(\mathbb{R}^n)$, $K \in (1,2]$ and $f(z) = z$ for all $z \in \partial G$. There exists $c(K)$ such that for all $x,y \in G$
  \[
    j_G(f(x),f(y)) \le c(K) \max \{ j_G(x,y)^\alpha , j_G(x,y) \},
  \]
  where $\alpha = K^{1/(1-n)}$, and $c(K) \to 1$ as $K \to 1$.
\end{corollary}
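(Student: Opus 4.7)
The plan is to mimic the proof of Theorem \ref{main2}, replacing the unique boundary point $0$ of the domain $\mathbb{R}^n \setminus \{0\}$ by nearest boundary points to $f(x)$ and $f(y)$, which are automatically fixed by $f$ since $f|_{\partial G}=\mathrm{id}$. This allows us to re-use the $\eta^*_{K,n}$-quasisymmetry of $f$ with a fixed boundary point as the base of the triple, in direct analogy with the role played by $0$ in the proof of Theorem \ref{main2}.

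For the execution, I first note that any $g\in QC_K(\mathbb{R}^n)$ is $\eta^*_{K,n}$-quasisymmetric: given distinct points $a,b,c$, pre- and post-composing $g$ with similarities of $\mathbb{R}^n$ that send $(c,b)\mapsto(0,e_1)$ and $(g(c),g(b))\mapsto(0,e_1)$ produces a normalized map covered by the definition of $\eta^*_{K,n}$, and similarities preserve the ratio $|p-q|/|r-q|$. Now fix $x,y\in G$ and pick $w_1,w_2\in\partial G$ with $d(f(x))=|f(x)-w_1|$ and $d(f(y))=|f(y)-w_2|$. Applying $\eta^*_{K,n}$-quasisymmetry to the triples $(y,w_1,x)$ and $(x,w_2,y)$, and using $|w_1-x|\ge d(x)$, $|w_2-y|\ge d(y)$ together with the monotonicity of $\eta^*_{K,n}$, gives
\[
\frac{|f(x)-f(y)|}{d(f(x))}\le \eta^*_{K,n}\!\left(\frac{|x-y|}{d(x)}\right),\qquad \frac{|f(x)-f(y)|}{d(f(y))}\le \eta^*_{K,n}\!\left(\frac{|x-y|}{d(y)}\right),
\]
whose maximum, via $\max\{A/a,A/b\}=A/\min\{a,b\}$, yields
\[
\frac{|f(x)-f(y)|}{\min\{d(f(x)),d(f(y))\}}\le \eta^*_{K,n}\!\left(\frac{|x-y|}{\min\{d(x),d(y)\}}\right).
\]

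Write $T=|x-y|/\min\{d(x),d(y)\}$, so that $j_G(x,y)=\log(1+T)$. Combining the polynomial bound $\eta^*_{K,n}(T)\le c_3\max\{T^\alpha,T^\beta\}$ (with $c_3=\exp(60\sqrt{K-1})$ and $\beta=1/\alpha$) extracted in the proof of Proposition \ref{qcestimate} with Lemma \ref{genbernoulli}(4) applied to $a=\alpha$, $b=\beta$, $c=c_3$ produces
\[
j_G(f(x),f(y))\le \log\bigl(1+c_3\max\{T^\alpha,T^\beta\}\bigr) \le c_3\beta\max\{j_G(x,y)^\alpha,\, j_G(x,y)\},
\]
so taking $c(K):=c_3\beta$ closes the argument, since $c_3\to 1$ and $\beta\to 1$ as $K\to 1$. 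The step most in need of care is the first one, namely transferring the bound on the \emph{normalized} optimal control function $\eta^*_{K,n}$ to an arbitrary $K$-quasiconformal map via composition with similarities; once this similarity-invariance is in place, the rest is a bookkeeping adaptation of the proof of Theorem \ref{main2}, with nearest boundary points playing the role of the origin.
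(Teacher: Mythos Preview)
Your proof is correct and follows essentially the same approach as the paper's: both bound $|f(x)-f(y)|/d(f(x))$ and $|f(x)-f(y)|/d(f(y))$ via the $\eta^*_{K,n}$-quasisymmetry of $f$, using a fixed boundary point as the base of the quasisymmetry triple, and then apply the polynomial bound on $\eta^*_{K,n}$ together with Lemma~\ref{genbernoulli}(4) to arrive at $c(K)=c_3/\alpha$. The only cosmetic difference is that the paper first normalizes so that a nearest boundary point to $x$ sits at the origin and assumes $d(x)\le d(y)$, whereas you keep the argument symmetric by choosing nearest boundary points $w_1,w_2\in\partial G$ to $f(x)$ and $f(y)$ directly; your explicit remark on transferring $\eta^*_{K,n}$ to arbitrary triples via similarities makes the quasisymmetry step more transparent than in the paper.
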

\begin{proof}
  We may assume $0,e_1 \in \partial G$ and $d(x) = |x| \le d(y) \le |y|$. Now for $z \in \partial G$
  \[
    \frac{|f(y)-f(x)|}{|z-f(x)|} = \frac{|f(y)-f(x)|}{|f(z)-f(x)|} \le \eta \left( \frac{|x-y|}{|z-x|} \right) \le \eta \left( \frac{|x-y|}{|x|} \right)
  \]
  and
  \[
    \frac{|f(y)-f(x)|}{|z-f(y)|} = \frac{|f(y)-f(x)|}{|f(z)-f(y)|} \le \eta \left( \frac{|x-y|}{|z-y|} \right) \le \eta \left( \frac{|x-y|}{|y|} \right) \le \eta \left( \frac{|x-y|}{|x|} \right).
  \]
  Therefore by Proposition \ref{qcestimate} and Lemma \ref{genbernoulli} (4)
  \begin{eqnarray*}
    j(f(x),f(y)) & = & \log \left( 1+\frac{|f(x)-f(y)|}{\displaystyle \min_{z \in \partial G} \{ |f(x)-z|,|f(y)-z| \}} \right)\\
    & \le & \log \left( 1+\eta \left( \frac{|y-x|}{|x|} \right) \right)\\
    & \le & \log \left( 1+c_3 \max \left\{ \frac{|y-x|^\alpha}{|x|^\alpha},\frac{|y-x|^{1/\alpha}}{|x|^{1/\alpha}} \right\} \right) \\
    & \le & \left\{ \begin{array}{ll} c_3 \log^\alpha \left( 1+\frac{|y-x|}{|x|} \right), & 0<|y-x|<1,\\ \frac{c_3}{\alpha} \log \left( 1+\frac{|y-x|}{|x|} \right), & |y-x| \ge 1. \end{array} \right.
  \end{eqnarray*}
  By choosing $c(K) = c_3/\alpha$, where $c_3=\exp (60 \sqrt{K-1})$ is as in Lemma \ref{c3estimate}, we have $c(K) \to 1$ as $K \to 1$ and the assertion follows.
\end{proof}

\begin{proof}[Proof of Theorem \ref{main3}]
Fix $\lambda \in (0,1/2)\,.$ Let $c\ge1$ be the constant of Theorem
\ref{main2} and write
\[
\mu(\lambda) = \frac{1}{c} \min \{ \log(1/(1-\lambda)), \log^{\beta}(1/(1- \lambda)) \}
= \frac{1}{c}  \log^{\beta}(1/(1-\lambda))\,, \quad \beta =1/ \alpha\,.
\]
Note that $c \to 1$ when $K \to 1\,.$ The rest of the proof is now divided into two cases.

{\bf Case A.} $k_G(x,y)\le \mu(\lambda)\,.$ By Theorem \ref{main2} and Lemma \ref{ksvbd} we have
\begin{eqnarray} \label{mybd1}
\frac{|f(x)-f(y)|}{|f(x)|} & \le & \exp \left( {c} \max \{ j_G(x,y)^{\alpha}, j_G(x,y) \} \right) -1 \nonumber\\
& \le & \exp \left( {c} \max \left\{ \frac{\log^{\beta}(1/(1-\lambda))}{c} , \frac{\log(1/(1-\lambda))}{c}  \right\} \right)-1\nonumber\\ & = & \lambda/(1-\lambda)\,.
\end{eqnarray}
Therefore we can use Lemma \ref{ksvbd} and Theorem \ref{main2} to find an upper bound for $k_G(f(x),f(y))$ in terms of $k_G(x,y)$ and $\lambda$. By Lemma \ref{ksvbd},
(\ref{mybd1}), and Theorem \ref{main2} we have
\begin{eqnarray*}
  k_G(f(x),f(y)) & \le & c_1(\lambda) j_G(f(x),f(y)) \le c c_1(\lambda) \max \{ j_G(x,y)^{\alpha}, j_G(x,y) \} \\
  & \le & c c_1(\lambda)   \max \{ k_G(x,y)^{\alpha}, k_G(x,y) \} \,.
\end{eqnarray*}

{\bf Case B.} $k_G(x,y)\ge \mu(\lambda)\,.$ Choose points $x_j,j=0,...,p+1,$ on the
quasihyperbolic geodesic segment $J_G[x,y]$ joining $x$ with $y$ such that $x_0 =y,
x_{p+1}=x$ and $k_G(x_{j+1},x_j)= \mu(\lambda)$ for $j=0,...,p-1\,,$ $k_G(x_{p+1},x_p) \le
\mu(\lambda)\,.$ Because the quasihyperbolic distance is additive on the geodesic we see
that
\[
k_G(x,y)= \sum_{j=0}^p k_G(x_{j+1},x_j) \ge p \mu(\lambda) \Rightarrow p \le k_G(x,y)/\mu(\lambda)\,.
\]
Next, by the triangle inequality and by Case A
\[
k_G(f(x),f(y))\le \sum_{j=0}^p  k_G(f(x_{j+1}),f(x_j)) \le c c_1(\lambda)
\sum_{j=0}^p  k_G(x_{j+1},x_j)^{\alpha} =V\,.
 \]
Next, using the property of weighted mean values \cite[p. 76, Theorem 1]{m}
\[ \sum_{j=0}^p u_j^{\alpha} \le
(p+1)^{1-\alpha} \left( \sum_{j=0}^p u_j \right) ^{\alpha}\, \textrm{ for\,  all \, positive}\,\, u_j \] we have
\begin{eqnarray*}
  V & \le & c c_1(\lambda)(p+1)^{1- \alpha} \left(\sum_{j=0}^p k_G(x_{j+1},x_j) \right)^{\alpha}\\
  & \le &  c c_1(\lambda) \mu(\lambda)^{\alpha -1 } \left( 1+ \frac{\mu(\lambda)}{k_G(x,y)} \right) ^{1-\alpha} k_G(x,y)\\
  & \le & c c_1(\lambda) \mu(\lambda)^{\alpha -1 } 2^{1-\alpha} k_G(x,y) \,.
\end{eqnarray*}

 In view of the above Cases A and B we see that in both cases we have the claim with the
 constant
 \[
 \omega(K,n) = c c_1(\lambda) \mu(\lambda)^{\alpha -1 } 2^{1-\alpha}   \,.
 \]
 It remains to prove that $ \omega(K,n) \to 1$ when $K \to 1\,.$ It suffices to show that
 we can choose $\lambda$ depending on $K,n$ such that
 \[
\mu(\lambda)^{\alpha -1 } = c^{1- \alpha} \log^{1- \beta}(1/(1-\lambda)) \to 1
 \]
 when $K \to 1\,.$ For instance the  choice $\lambda = \beta-1$ will do.
 \end{proof}

Agard and Gehring have studied the angle distortion under quasiconformal mappings
of the plane \cite{ag}. Motivated by their work we record the following corollary
of Theorem \ref{main3}.

\begin{corollary} Suppose that under the hypotheses of Theorem \ref{main3} $x, y \in
S^{n-1}$ and $f(x), f(y) \in S^{n-1}$ and let $\phi$ and $\psi$ be the angles between
the segments $[0,x], [0,y]$ and $[0,f(x)],[0, f(y)]\,,$ respectively. Then
\[
\psi \le \omega(K,n) \max \{ \phi^{\alpha}, \phi \} \,.
\]
\end{corollary}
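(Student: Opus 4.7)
The plan is to reduce the corollary directly to Theorem \ref{main3} by using the Martin--Osgood formula \eqref{moformula} to identify the quasihyperbolic distance in $G = \mathbb{R}^n \setminus \{0\}$ between two unit vectors with the angle they subtend at the origin.

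First I would observe that since $x,y \in S^{n-1}$ we have $|x|=|y|=1$, so the logarithmic term in \eqref{moformula} vanishes and
\[
k_G(x,y) = 2\arcsin\!\left(\tfrac{1}{2}|x-y|\right).
\]
Now $|x-y| = 2\sin(\phi/2)$ by the law of cosines (or elementary trigonometry in the plane spanned by $x$ and $y$), where $\phi \in [0,\pi]$ is the angle between the segments $[0,x]$ and $[0,y]$. Therefore
\[
k_G(x,y) = 2\arcsin(\sin(\phi/2)) = \phi,
\]
since $\phi/2 \in [0,\pi/2]$. The same computation applied to the pair $f(x), f(y) \in S^{n-1}$ yields $k_G(f(x),f(y)) = \psi$.

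Finally I would invoke Theorem \ref{main3}, whose hypotheses are exactly those of the corollary, to conclude
\[
\psi = k_G(f(x),f(y)) \le \omega(K,n)\max\{k_G(x,y)^{\alpha},k_G(x,y)\} = \omega(K,n)\max\{\phi^{\alpha},\phi\},
\]
which is the desired inequality. There is no real obstacle here: the only nontrivial step is the trigonometric identification of $k_G$ with the angle on the unit sphere, which is immediate from \eqref{moformula}. The corollary is therefore a direct geometric reading of Theorem \ref{main3} in the case where both the domain points and their images lie on the unit sphere.
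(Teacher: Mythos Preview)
Your proof is correct and follows exactly the approach indicated in the paper, which simply says the result follows easily from the Martin--Osgood formula \eqref{moformula}. You have spelled out the computation the paper leaves implicit: on $S^{n-1}$ the logarithmic term in \eqref{moformula} vanishes and the remaining $\arcsin$ term reduces to the angle, so Theorem \ref{main3} yields the inequality directly.
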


\begin{proof}
The proof follows easily from the Martin-Osgood formula (\ref{moformula})\,.
\end{proof}

\begin{remark}
  It is well-known (see \cite{lvv},\cite[Lemma 4.28]{avv2}) that for a given $K \ge 1$ there exists a $K$-quasiconformal mapping $f \colon \Rn \to \Rn$ with $f(0)=0$ such that $f(-e_1) = -e_1$ and $f(e_1) = \lambda(K^{1/(n-1)})e_1$, where $\lambda(K) = \varphi_{K}(1/\sqrt{2})^2/(1-\varphi_{K}(1/\sqrt{2})^2)$ (see \cite[p. 203, 204]{avv}). Choosing $G = \R^n \setminus \{ 0 \}$ and $x = 1 =-y$ in Theorem \ref{main2} we see that
\[
  \log (1+L) \le c(K)\log 3,
\]
where $L=(1+\lambda(K^{1/(n-1)}))$, and hence the constant $c(K)$ has to satisfy
\[
  c(K) \ge \frac{\log(2+\lambda(K^{1/(n-1)}))}{\log 3}.
\]
In order to compare this estimate to the upper bound in Theorem \ref{main2} well-known estimates for $\lambda(K^{1/(n-1)})$ may be used. For instance we know that $\lambda(K) \ge \exp (\pi(K-1))$ by \cite[Corollary 10.33]{avv}.

This same idea can be applied to produce a lower bound for the constant $\omega(K,n)$
of Theorem \ref{main3} as well.
\end{remark}

\textbf{Acknowledgement.} The second author wishes to acknowledge the suggestions offered by Vladimir Bozin.

\small

\normalsize

\appendix

\section{Appendix: Bernoulli type inequalities}

In Lemma \ref{genbernoulli} we introduced Bernoulli type inequalities. In this appendix we introduce some additional Bernoulli type inequalities.

\begin{lemma}
  Let $0 < a \le 1 \le b$ and $\varphi(t) = \max \{ t^a,t^b \}$. Then, with $u=\log^{1-a} 2$, $v=\log^{1-b} 2$,
  \begin{itemize}
    \item[(1)] For $t \in (0,\infty)$
    \[
      f_3(t) = \frac{\log (1+t^b)}{\log^b (1+t)} \le v.
    \]
    The function $f_3(t)$ is increasing on $(0,1)$ and decreasing on $(1,\infty)$ with $f_3(1) = v$.
    \item[(2)] For $t \in (0,\infty)$
    \[
      v^{-1} \log(1+\varphi(t)) \le \varphi(\log (1+t)).
    \]
    \item[(3)] For $t \in (0,\infty)$
    \[
      \varphi(\log(1+t)) \le \left\{ \begin{array}{ll} u^{-1} \log(1+\varphi(t)), & t \in (0,e-1], \\ \log^b (1+\varphi(t)), & t>e-1. \end{array} \right.
    \]
    \item[(4)] For $t \in (0,t_0)$, $t_0 > 0$ and
    \[
      f_8(t) = \frac{\log (1+\varphi(t))}{\varphi(\log(1+t))}
    \]
    we have $\min \{ u, f(t_0) \} \le f(t) \le \log(1+(e-1)^b)$.
    \item[(5)] For $s,t > 0$
    \[
      2^{1-b} \le \frac{\varphi(s)+\varphi(t)}{\varphi(s+t)} \le 2^{1-a}.
    \]
  \end{itemize}
\end{lemma}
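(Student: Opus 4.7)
The plan is to prove the five items in order, since each draws on the previous ones and on Lemma~2.14.

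For \emph{(1)}, logarithmic differentiation of $f_3$ reduces $f_3'(t)\ge 0$, after multiplying through and dividing by $t^b$, to $g(t)\ge g(t^b)$ with $g(s)=(1+1/s)\log(1+s)$. The function $g$ is already known to be strictly increasing (it appears in the proof of Lemma~2.14), so the inequality holds iff $t\le 1$ (using $b\ge 1$). Hence $f_3$ increases on $(0,1)$, decreases on $(1,\infty)$, with peak $f_3(1)=\log 2/\log^b 2=v$; both the monotonicity and the global bound follow.

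Parts \emph{(2)} and \emph{(3)} are then case analyses, stratifying at the switching points $t=1$ and $t=e-1$ of the two maxima $\varphi(t)$ and $\varphi(\log(1+t))$. In each of the three resulting subregions the desired inequality collapses either to \emph{(1)}, to Lemma~2.14(2), or to the elementary $t\le t^b$ for $t\ge 1$, combined with the smoothing observation $\log^b(1+t)\le\log^a(1+t)$ on the strip $t\in(1,e-1]$ (valid there because $\log(1+t)\le 1$) and the trivial facts $u\le 1\le v$ from $\log 2<1$. Part \emph{(4)} is then a piecewise reading of $f_8$: it equals $f_2$ on $(0,1]$ (decreasing to $f_2(1)=u$), $f_3$ on $[e-1,\infty)$ (decreasing from $f_3(e-1)=\log(1+(e-1)^b)$), and $\log(1+t^b)/\log^a(1+t)$ on $[1,e-1]$; for the middle piece, a short logarithmic-derivative argument mirroring \emph{(1)} shows it is increasing between the matching boundary values $u$ and $\log(1+(e-1)^b)$. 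The global minimum $u$ at $t=1$ and global maximum $\log(1+(e-1)^b)$ at $t=e-1$ then give the stated envelope on $(0,t_0)$.

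For \emph{(5)} I would split by the positions of $s,t,s+t$ relative to $1$. When $s+t\le 1$ or when $s,t\ge 1$, the bounds $2^{1-a}$ and $2^{1-b}$ follow from Jensen's inequality applied to the concave $x^a$ and from the dual convexity of $x^b$ (equivalently $(s+t)^b\le 2^{b-1}(s^b+t^b)$). The main obstacle is the mixed case $s\le 1\le t\le s+t$, where $\varphi$ evaluates on different branches and homogeneity fails; I would handle it by setting $r=s+t$, $\sigma=s/r\in[0,\min(1,1/r)]$, reducing both bounds to the one-variable estimate
\[2^{1-b}\le \sigma^a r^{a-b}+(1-\sigma)^b\le 2^{1-a},\]
and checking this function at its endpoints in $\sigma$ and at its interior critical point against the constraints $0<a\le 1\le b$.
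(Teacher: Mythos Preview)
Your treatment of (1)--(4) is essentially the paper's own argument. The paper proves (1) by remarking that it is Lemma~2.14(2) with the exponent on the other side of $1$ (``choosing $b=1/a$''), which is exactly your logarithmic-differentiation reduction to $g(t)\ge g(t^b)$; for (2), (3) and (4) the paper does the same three-interval case split at $t=1$ and $t=e-1$, invoking Lemma~2.14(2) and part (1) in each piece, and for the middle piece of (4) it verifies monotonicity of $\log(1+t^b)/\log^a(1+t)$ by the same derivative check you describe. So nothing to add there.

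The difference is in (5). The paper does not argue at all: it simply cites \cite[1.58(27), p.~340]{avv}, where the inequality $2^{1-b}\le (\varphi(s)+\varphi(t))/\varphi(s+t)\le 2^{1-a}$ is recorded. Your direct case analysis is a reasonable alternative, but as written it is incomplete in two respects. First, you omit the region $s,t\le 1<s+t$ (both arguments on the $a$-branch but the sum on the $b$-branch); this case is in fact easy, since $s^a+t^a\le 2^{1-a}(s+t)^a\le 2^{1-a}(s+t)^b$ and $s^a+t^a\ge s^b+t^b\ge 2^{1-b}(s+t)^b$, but it should be stated. Second, in the genuinely mixed case $s\le 1\le t$ your one-variable reduction is correct, but the constraint on $\sigma=s/r$ should be $\sigma\le\min\{1/r,\,1-1/r\}$ (you also need $t\ge 1$), and ``checking endpoints and the interior critical point'' is not a proof: the function $\sigma\mapsto \sigma^a r^{a-b}+(1-\sigma)^b$ is neither monotone nor convex in general, its critical point has no closed form, and the dependence on the free parameter $r\ge 1$ must be controlled simultaneously. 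A cleaner route for the upper bound is to fix $t\ge 1$ and note that $h(s)=s^a+t^b-2^{1-a}(s+t)^b$ is concave in $s$ on $[0,1]$ (both second-derivative terms are nonpositive), so it suffices to check $h(0)\le 0$ and $h(1)\le 0$; the first is immediate and the second reduces to $1+t^b\le 2^{1-a}(1+t)^b$, which you can then handle in $t$. The lower bound in this case is genuinely easy via $s^a\ge s^b$. If you do not want to carry this out, citing \cite[1.58(27)]{avv} as the paper does is the honest option.
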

\begin{proof}
  \textit{(1)} Follows from Lemma \ref{genbernoulli} \textit{(2)} by choosing $b = 1/a$.

  \textit{(2)} If $t \in (0,1]$ then by Lemma \ref{genbernoulli} \textit{(2)}
  \[
    \log(1+\varphi(t)) = \log(1+t^a) \le \log^a(1+t) = \varphi(\log(1+t))
  \]
  and if $t \in [1,e-1]$ then by \textit{(1)}
  \[
    \log(1+\varphi(t)) = \log(1+t^b) \le v \log^b(1+t) \le v \log^a(1+t) =  v \varphi(\log(1+t)).
  \]
  If $t \ge e-1$ then by \textit{(1)}
  \[
    \log(1+\varphi(t)) = \log(1+t^b) \le v \log^b (1+t) = v \varphi(\log(1+t))
  \]
  and the assertion follows.

\textit{(3)} If $t \in (0,1]$ then by Lemma \ref{genbernoulli} \textit{(2)}
  \[
    \varphi(\log(1+t)) = \log^a(1+t) \le u^{-1} \log (1+t^a) = u^{-1} \log(1+\varphi(t)).
  \]
  and if $t \in [1,e-1]$ then by Lemma \ref{genbernoulli} \textit{(2)}
  \[
    \varphi(\log(1+t)) = \log^a(1+t) \le u^{-1} \log (1+t^a) \le u^{-1} \log(1+t^b) = u^{-1} \log(1+\varphi(t)).
  \]
  If $t \ge e-1$ then by \textit{(1)}
  \[
    \varphi(\log(1+t)) = \log^b (1+t) \le \log^b (1+t^b) = \log^b (1+\varphi(t)).
  \]
  and the assertion follows.

  \textit{(4)} For $t \in (0,1]$, Lemma \ref{genbernoulli} \textit{(2)} implies that $u \le f(t) \le 1$ and for $t \ge e-1$, \textit{(1)} implies that $f(t_0) \le f(t) \le f(e-1) = \log (1+(e-1)^b)$.

  Let us consider the case $t \in (1,e-1)$. Now $f'(t) \ge 0$ is equivalent to
  \begin{equation}\label{f8estimate}
    \frac{\log(1+t^b)}{\log(1+t)} \le \frac{b(1+t)}{a t (1+t^{-b})}.
  \end{equation}
  We have
  \[
    \frac{\log(1+t^b)}{\log(1+t)} \le b \le \frac{b(1+t)}{t (1+t^b)} \le \frac{b(1+t)}{a t (1+t^{-b})},
  \]
  where the first inequality follows from Lemma \ref{genbernoulli} \textit{(1)} and the second inequality holds, because $t(1+t^{-b}) \le 1+t$ is equivalent to $t^{1-b} \le 1$ which holds true by the selection of $b$ and $t$. Now \eqref{f8estimate} holds and $f(t)$ is increasing on $(1,e-1)$. Thus $u = f(1) \le f(t) \le f(e-1)$. The assertion follows by combining the three cases.

  \textit{(5)} Follows from \cite[1.58 (27) p. 340]{avv}.
\end{proof}

{\rm\small

\medskip

({\it Riku Kl\'en, Matti Vuorinen}) {\sc Department of Mathematics and Statistics, University of Turku, 20014 Turku, Finland}

{\it E-mail address}: \verb"riku.klen@utu.fi, vuorinen@utu.fi"

\medskip

({\it Vesna Todor\v{c}evi\'c}) {\sc Mathematical Institute of the Serbian Academy of Sciences and Arts,
Faculty of Organizational Sciences, University of Belgrade, Serbia}

{\it E-mail address}: \verb"vesnat@fon.bg.ac.rs"

}

\end{document}